\documentclass[11pt]{amsart}

\usepackage{a4wide}
\usepackage[T1]{fontenc}
\usepackage{amsmath}
\usepackage{amsfonts}
\usepackage{amssymb}
\usepackage{amsthm}
\usepackage{esint}
\usepackage{graphicx}
\usepackage{enumerate}
\usepackage{mathrsfs}
\usepackage{cite}
\usepackage{wasysym}
\usepackage{xr}
\usepackage{constants}
\usepackage[footnotesize]{caption}
\usepackage{color}

\resetconstant

\newconstantfamily{R}{
symbol=R,
format=\arabic,
}

\newtheorem{thm}{Theorem}[section]
\newtheorem*{thm*}{Theorem}
\newtheorem{prop}[thm]{Proposition}
\newtheorem{lem}[thm]{Lemma}

\theoremstyle{definition}
\newtheorem{defin}[thm]{Definition}
\newtheorem{rem}[thm]{Remark}

% Local Variables:
% coding: iso-8859-2
% eval: (ispell-change-dictionary "american")
% mode: latex
% mode: flyspell
% End:

\DeclareMathOperator{\dist}{dist}
\DeclareMathOperator{\diam}{diam}
\DeclareMathOperator{\lin}{span}
\DeclareMathOperator{\opid}{id}
\DeclareMathOperator{\aff}{aff}
\DeclareMathOperator{\conv}{\bigtriangleup}

\DeclareMathOperator{\graph}{graph}
\DeclareMathOperator{\dgras}{\varangle}

\newcommand{\E}{\mathcal{E}}
\newcommand{\M}{\mathcal{M}}

\newcommand{\N}{\mathbb{N}}
\newcommand{\Z}{\mathbb{Z}}

\newcommand{\Ball}{\mathbb{B}}

\newcommand{\R}{\mathbb{R}}
\newcommand{\HM}{\mathcal{H}}

\newcommand{\DC}{\mathcal{K}}

\newcommand{\face}{\mathfrak{fc}}
\newcommand{\height}{\mathfrak{h}}
\newcommand{\plane}{\mathfrak{p}}
\newcommand{\hmin}{\height_{\min}}

\newcommand{\wek}[1]{\vec{#1}}
% brzydko wyglada :(  - nie wiem czemu
% \newcommand{\wek}[1]{\overrightarrow{#1}} 
\newcommand{\pkt}[1]{\mathbf{#1}}
\newcommand{\wpt}[1]{\wek{\pkt{#1}}}

% Local Variables:
% coding: iso-8859-2
% eval: (ispell-change-dictionary "american")
% eval: (flyspell-mode)
% End:

\title{Minimal H{\"o}lder regularity implying finiteness of integral Menger curvature}
\author{S{\l}awomir Kolasi{\'n}ski 
and Marta Szuma{\'n}ska} 

\address{Institute of Mathematics\\
  University of~Warsaw\\
  Banacha~2, 02-097 Warsaw\\
  Poland}
\email{s.kolasinski@mimuw.edu.pl, m.szumanska@mimuw.edu.pl}
\keywords{Menger curvature, repulsive potentials, regularity theory}
\subjclass{Primary: 49Q10; Secondary: 28A75, 49Q20, 49Q15}
\date{\today}

\begin{document}

\begin{abstract}
  We study two families of integral functionals indexed by a real number
  $p > 0$. One family is defined for $1$-dimensional curves in $\R^3$ and the
  other one is defined for $m$-dimensional manifolds in $\R^n$. These
  functionals are described as integrals of appropriate integrands (strongly
  related to the Menger curvature) raised to power $p$. Given $p > m(m+1)$
  we prove that $C^{1,\alpha}$ regularity of the set (a curve or a manifold),
  with $\alpha > \alpha_0 = 1 - \frac{m(m+1)}p$ implies finiteness of both
  curvature functionals ($m=1$ in the case of curves). We also show that
  $\alpha_0$ is optimal by constructing examples of $C^{1,\alpha_0}$ functions
  with graphs of infinite integral curvature.
\end{abstract}

\maketitle

\section{Introduction}
Geometric curvature energies are functionals defined on submanifolds of an
Euclidean space, whose values can be considered as "total curvature" of the
set. Desired features of such energies are regularization and self-avoidance
effects - i.e. finiteness of the energy implies higher regularity of the set
and excludes self-intersections. These properties make curvature energies
extremely useful in modeling long, entangled objects like DNA molecules,
protein structures or polymer chains; see for example the paper by Banavar et
al.~\cite{MR1966322} or the book by Sutton and Balluffi~\cite{SB97} and the
references therein. In this paper we study two families of such functionals.

Let $S_L$ be a circle of length $L$. Given $p \in [1,\infty]$ and an arc-length
parametrization $\Gamma : S_L \to \R^3$ of some curve $\gamma \subseteq \R^3$
(by \emph{arc-length} we mean that $|\Gamma'| = 1$ a.e.), we define
\begin{displaymath}
  \M_p(\gamma) := \int_{S_L} \int_{S_L} \int_{S_L}
  R^{-p}(\Gamma(x), \Gamma(y), \Gamma(z))\,dx\,dy\,dz \,,
\end{displaymath}
where $R^{-1}(a,b,c)$ is the \emph{Menger curvature} of a triple of points
$(a,b,c)$, i.e. the inverse of the radius of the smallest circle passing
through $a$, $b$ and $c$ (cf. Definition~\ref{menger}).

For an $m$-dimensional manifold $\Sigma \subseteq \R^n$ we set
\begin{displaymath}
  \E_p(\Sigma) = \int_{\Sigma^{m+2}}
  \DC(\pkt{x}_0,\ldots,\pkt{x}_{m+1})^p\ d\HM^m_{\pkt{x}_0} \cdots d\HM^m_{\pkt{x}_{m+1}} \,,
  \qquad
  \Sigma^{m+2} = \underbrace{\Sigma \times \cdots \times \Sigma}_{(m+2) \text{ times}} \,,
\end{displaymath}
where $\DC$ is an appropriately defined analogue of the Menger curvature in
higher dimensions (see Definition~\ref{def:dc}).

The functionals $\M_p$ and $\E_p$ occurred to serve very well as geometric
curvature energies. If~$p > m(m+2)$ then finiteness of $\M_p(\gamma)$ ($m=1$
in this case) or $\E_p(\Sigma)$ implies lack of self-intersections and
$C^{1,\alpha}$ regularity, with $\alpha = 1 - \frac{m(m+2)}p$
(see~\cite{stszvdm} for the $\M_p$ case and~\cite{slawek-phd} for the $\E_p$
case). In this paper we prove that implications in the inverse direction also
hold if we assume a little more about $\alpha$.
\begin{thm*}\ 
  \begin{itemize}
  \item If $\alpha > 1 - \frac 2p$ and $\Gamma \in C^{1,\alpha}$ is injective
    then $\M_p(\gamma)$ is finite.
  \item If $\alpha > 1 - \frac{m(m+1)}p$ and $\Sigma$ is compact and
    $C^{1,\alpha}$ regular then $\E_p(\Sigma)$ is finite.
  \end{itemize}
  Moreover, $\alpha_0 = 1-\frac{m(m+1)}p$ is the minimal H{\"o}lder exponent
  above which we have finite energy.
\end{thm*}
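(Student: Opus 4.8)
The plan is to establish both finiteness statements through a single two–regime scheme, and the heart of the matter is a sharp pointwise bound on the integrand for tuples of small diameter: there are $\epsilon>0$ and $C$ so that whenever the points of a tuple have diameter $d\le\epsilon$,
\[
  R^{-1}(\Gamma(x),\Gamma(y),\Gamma(z)) \le C\,d^{\alpha-1}, \qquad
  \DC(\pkt{x}_0,\ldots,\pkt{x}_{m+1}) \le C\,d^{\alpha-1}.
\]
For curves I would use the identity $R^{-1}(a,b,c)=2\,\dist(b,\aff\{a,c\})/(|a-b|\,|b-c|)$ with $b$ the middle point. Writing $b-a=\int\Gamma'$ and replacing $\Gamma'$ by the tangent at the middle point with $C^{1,\alpha}$ error $O(d^{1+\alpha})$, the numerator (the height over the long chord) is $O(d^{1+\alpha})$, while $|a-b|$ and $|b-c|$ are comparable to the arclength gaps; for $d$ small the chord–arc estimate $|\Gamma(x)-\Gamma(z)|\ge\tfrac12|x-z|$ is elementary, so $R^{-1}\lesssim d^{1+\alpha}/d^{2}=d^{\alpha-1}$. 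In the manifold case the same mechanism applies: represent $\Sigma$ near $\pkt{x}_0$ as a $C^{1,\alpha}$ graph over its tangent plane $T$, so every point of the tuple lies within $O(d^{1+\alpha})$ of $T$; since $\DC$ scales like an inverse length and vanishes on flat configurations, this deviation from $T$ forces $\DC\lesssim d^{\alpha-1}$.

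Next I would integrate. Fixing $\pkt{x}_0$ and decomposing the remaining $m+1$ points into dyadic shells where the diameter is $\approx r$, each such shell carries $\HM^{m}$-measure $\approx r^{m(m+1)}$, so the near-diagonal contribution is controlled by $\sum_{r=2^{-k}} r^{m(m+1)}\,r^{p(\alpha-1)}$, which converges precisely when $m(m+1)+p(\alpha-1)>0$, i.e. $\alpha>\alpha_0=1-\tfrac{m(m+1)}p$ (and $1-\tfrac2p$ when $m=1$). Integrating over the compact $\Sigma$ (or $S_L$) keeps this finite. For the complementary region, where the diameter is bounded below, I would invoke $\DC\lesssim 1/\diam$ together with compactness and injectivity/embeddedness: off the full diagonal the chord-diameter is bounded below, so the integrand is bounded and the integral over a finite-measure domain is finite. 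Summing the two regimes yields finiteness.

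For optimality I would exhibit a lacunary graph realizing the borderline exponent. Take $\Sigma=\graph f$ with $f(x)=\sum_{k\ge0}2^{-k(1+\alpha_0)}\psi(2^{k}x)$ for a fixed periodic bump $\psi$; a standard estimate shows $f\in C^{1,\alpha_0}$. At each scale $r=2^{-k}$ the graph genuinely curves, deviating from its tangent plane by $\approx r^{1+\alpha_0}$ near each of the $\approx r^{-m}$ bumps, so there is a positive-measure family of $(m+2)$-tuples of diameter $\approx r$ on which $\DC\gtrsim r^{\alpha_0-1}$. Its measure is $\approx r^{-m}\cdot r^{m(m+2)}=r^{m(m+1)}$, whence the scale-$k$ contribution is $\gtrsim r^{m(m+1)}\,r^{p(\alpha_0-1)}=r^{0}\approx1$; summing over $k$ gives $\E_p=\infty$ (and $\M_p=\infty$ for $m=1$), proving $\alpha_0$ optimal.

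The main obstacle I expect is the pointwise bound in the manifold case: one must convert the $C^{1,\alpha}$ flatness of $\Sigma$ into an upper bound for $\DC$ that is robust against degenerate, nearly lower-dimensional placements of the $m+1$ points, where naive distance products could be small. This is handled by reading $\DC$ through Definition~\ref{def:dc} as a normalized volume—deviation from an $m$-flat divided by a diameter power—which is insensitive to such degeneracies. A secondary technical point, in both the upper and lower bounds, is making the chord–arc comparison and the tangent-plane graph representation uniform over $\Sigma$, which is where compactness and the $C^{1,\alpha}$ modulus enter quantitatively.
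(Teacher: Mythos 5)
Your two-regime scheme for the finiteness statements is essentially the paper's own proof: the pointwise bound $\DC \le C\,\beta(\pkt{x}_0,d)/d \le C\,d^{\alpha-1}$ (Lemmas~\ref{lem:beta-est} and~\ref{lem:dc-est}) followed by the dyadic decomposition into shells of measure $\approx r^{m(m+1)}$ is exactly the argument of Theorem~\ref{thm:fin-curv}, and the curve case in Lemma~\ref{top} is the same count organized slightly differently. One caution in the curve case: for degenerate triples where the middle point $b$ is very close to an endpoint, the denominator $|a-b|\,|b-c|$ is much smaller than $d^{2}$, so ``height $O(d^{1+\alpha})$ over $d^{2}$'' does not literally yield $d^{\alpha-1}$; you need the sharper fact that the height satisfies $h\le C d^{\alpha}\min(|a-b|,|b-c|)$, which is precisely what the double-cone inclusion of Lemma~\ref{holder} delivers, and only then is the bound uniform over the whole shell.

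The genuine gap is in the optimality half. The claim that at each scale $r=2^{-k}$ there is a positive-measure family of tuples of diameter $\approx r$ with $\DC\gtrsim r^{\alpha_0-1}$ is exactly the hard part, and ``the graph genuinely curves, deviating from its tangent plane by $\approx r^{1+\alpha_0}$'' is not yet a proof of the required \emph{lower} bound, for two reasons. First, the lacunary sum defining the derivative mixes all scales: to show that the scale-$k$ tooth produces a second difference of size $\ge c\,r^{1+\alpha_0}$ one must rule out cancellation both from the scales $n\le k$ (which must add coherently, forcing a careful choice of where $x$, $y$, $z$ sit inside a period --- the intervals $X_{k,m}$, $Y_{k,m}$, $Z_{k,m}$ of \S\ref{sec:curv-ex}) and from the scales $n>k$ (whose total contribution must be made strictly smaller than the main term, which is why the paper takes the lacunarity base $N$ large rather than $2$). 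Second, in the manifold case a large deviation of one vertex from the tangent plane does not by itself give $\DC\gtrsim r^{\alpha_0-1}$: one must exhibit $m+1$ vertices spanning a nondegenerate $m$-simplex of diameter $\approx r$ \emph{and} show that the last vertex has height $\gtrsim r^{1+\alpha_0}$ over their affine span rather than over the tangent plane; transferring the one-dimensional second-difference estimate to that (tilted) span is the content of Proposition~\ref{prop:h-est} and takes real work. Your scale counting $r^{-m}\cdot r^{m(m+2)}\cdot r^{p(\alpha_0-1)}\approx 1$ is correct and matches the paper's; it is the pointwise lower bound feeding into it that still needs to be supplied.
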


Note that there is a gap between $1-\frac{m(m+2)}p$ and $1-\frac{m(m+1)}p$.
To~show that our estimates are sharp, in~\S\ref{sec:curv-ex} we construct
a~concrete example of a~$C^{1,1-2/p}$ function $F : \R \to \R$ for which
$\M_p(\graph(F))$ is infinite. Then in~\S\ref{sec:mfld-ex}, using essentially
the same construction, we also cook~up an~example of a~function $G : \R^m \to
\R$ for which $\E_p(G([0,1]^m))$ is infinite.

Recently Blatt \cite{blatt-note} showed that finiteness of $\M_p(\gamma)$ is
exactly equivalent to the condition that $\gamma$ lies in a
Sobolev-Slobodeckij space $W^{1+s,p}$, where $s = 1 - \frac 2p$. For
$\alpha>1-\frac 2p$ we have $C^{1,\alpha} \subseteq W^{1+s,p} \subseteq
C^{1,1-3/p}$, so the result by Blatt generalizes \cite{stszvdm} and also some
results of this paper. Nevertheless, our method is simpler and more
geometrical and we apply it also in higher dimensions.

Besides $\M_p$, in~\cite{pre05199464}, \cite{MR2489022} and~\cite{stszvdm}
two other functionals.
\[
\mathcal{U}_p:= \int_{S_L} \left(\inf_{\{s,t \in S_L\setminus\{u\} \ |  \ s\neq t\} }R(\Gamma(s),\Gamma(t),\Gamma(u) \right)^{-p} du
\]
and
\[
\mathcal{I}_p:= \int_{S_L} \int_{S_L} \left(\inf_{s \in S_L\setminus\{t,u\} }R(\Gamma(s),\Gamma(t),\Gamma(u) \right)^{-p} du \, dt \,,
\]
were examined. In all cases, finiteness of the above functionals (for
sufficiently large $p$) implies existence of injective arc-length
parametrization of $\gamma$ and assures its higher regularity.

Yet another example of a curvature energy for curves is the tangent-point
energy defined as
\[
\E_q(\Gamma) := \int_0^L \int_0^L \frac{ds\,dt}{r^q(\Gamma(t),\Gamma(s)},
\]
where $\Gamma$ is an arc-length parametrization of the curve, and
$r(\Gamma(t), \Gamma(s))$ is the radius of the unique circle passing through
$\Gamma(s)$ and tangent to the curve at $\Gamma(t)$. Properties of curves with
finite $\E_q$ energies for $q>2$ were investigated in \cite{1014.57007}
(in~$C^2$ case) and in \cite{strzvdM} (in continuous case). A~similar
tangent-point energy in higher dimensions was studied in~\cite{1102.3642},
where the authors once again establish self-avoidance and smoothing effects.

The functional $\M_2$ proved to be useful also in harmonic analysis. Using
roughly the same formula, one can define $\M_p$ for any Borel set $E$. David
and L{\'e}ger~\cite{MR1709304} showed that $1$-dimensional sets with finite
$\M_2$ total curvature are $1$-rectifiable. This was a crucial step in the
proof of Vitushkin's conjecture and allowed to fully characterize removable
sets of bounded analytical functions. Surveys of Mattila~\cite{MR1648114} and
Tolsa~\cite{MR2275656} explain in more detail the connection between these
subjects.

A close analogue of the energy $\E_p$, defined for $2$-dimensional, non-smooth
surfaces in $\R^3$ was also studied by Strzelecki and von der Mosel
in~\cite{0911.2095}. The authors proved that one can use their notion of total
curvature to impose topological constraints in variational problems. They
proved existence of area minimizing surfaces in a given isotopy class under
the constraint of bounded curvature.

Lerman and Whitehouse in~\cite{0805.1425} and in~\cite{MR2558685} suggested a
whole class of curvature energies for higher dimensional objects. However,
the~integrands in their definitions scale differently and it seems that these
energies can not serve our needs. Nevertheless, the authors proved
\cite[Theorems 1.2 and 1.3]{MR2558685} that their integral curvatures can be
used to characterize $d$-dimensional rectifiable measures thereby establishing
a~link between the theory of geometric energies and uniform rectifiablility in
the sense of David and Semmes~\cite{MR1251061}.

\begin{rem}
  We shall use a letter $C$ to denote a general constant, whose value may
  change from line to line even in one series of transformations.
\end{rem}

% Local Variables:
% coding: iso-8859-2
% eval: (ispell-change-dictionary "american")
% mode: latex
% mode: flyspell
% End:

% LocalWords:  tuple submanifolds injective functionals Menger Sobolev Blatt
% LocalWords:  Slobodeckij Strzelecki Mosel tuples Borel minimizers isotopy
% LocalWords:  rectifiablility

\section{Preliminaries}

In the section we introduce the notation that will be used throughout the
paper. We also explain the relations between two types of energies we
consider.

We use standard symbols for commonly used notions, therefore $\Ball(x,r)$ stands for the 
ball with radius $r$ centered at $x$, while we write $\Ball_r$ if the origin is the center
of the ball and $\Ball_r^m$, when we additionally want to emphasize the dimension of the ball.
We denote m-dimensional
Hausdorff measure by $\HM^m$ and $T_a\Sigma$ denotes the vector space tangent to $\Sigma$
at the point $a$. The symbol $S_L$ stands for the circle of length $L$,
i.e. $S_L = \R/L\Z$.

For a tuple~$T = (\pkt{x}_0,\ldots,\pkt{x}_k)$ of~$(k+1)$ points in~$\R^n$, we~write
$\conv T$ to denote the convex hull of the set $\{ \pkt{x}_0,\pkt{x}_1,\ldots,
\pkt{x}_k \}$, i.e. the smallest convex subset of~$\R^n$ which contains all
the points $\pkt{x}_0$, \ldots, $\pkt{x}_k$.  Typically $\conv T$ will just be
$k$-simplex (a~triangle for~$k = 2$ and a~tetrahedron for~$k = 3$).

As we mentioned in the introduction, Menger curvature of three points is a
reciprocal of the radius of the smallest circle passing through those points.
The expression given below, can be treated as an equivalent definition of the
radius of the curvature of three distinct points.
\begin{defin}
  \label{menger}
  Let $x_0$,$x_1$,$x_2$ be three points in $\R^n$. We define Radius of Menger curvature of $x_0$,$x_1$,$x_2$ as
  \begin{displaymath}
    R(x_0,x_1,x_2) = \frac{|x_1 - x_0||x_2 - x_0||x_2 - x_1|}{4 \HM^{2}(\conv(x_0,x_1,x_2))} \,.
  \end{displaymath}
\end{defin}
We can use the above formula to compare Menger curvature and its higher
dimensional generalizations \footnote{Natural generalization of Menger
  curvature of three points would be the inverse of the radius of the sphere
  passing through four points. However this definition is not rewarding for
  integral curvature energies - see~\cite[Appendix B]{0911.2095}}.
   In this
paper we use integral curvature functional $\E_p$ defined in~\cite{slawek-phd}
whose integrand is the $p$-th power of the discrete curvature $\DC$.
\begin{defin}
  \label{def:dc}
  Let $T = (\pkt{x}_0,\ldots,\pkt{x}_{m+1})$ be an $(m+2)$-tuple of points in
  $\R^n$. The \emph{discrete curvature} of $T$ is given by the formula
  \begin{displaymath}
    \DC(T) = \frac{\HM^{m+1}(\conv T)}{\diam(T)^{m+2}} \,.
  \end{displaymath}
\end{defin}
Now we introduce the definition of our functional.
\begin{defin}
  \label{def:p-energy}
  Let $\Sigma \subseteq \R^n$ be some $m$-dimensional subset of $\R^n$. We define
  the \emph{$p$-energy} of $\Sigma$ by the formula
  \begin{displaymath}
    \E_p(\Sigma) = \int_{\Sigma^{m+2}} \DC(\pkt{x}_0,\ldots,\pkt{x}_{m+1})^p\ 
    d\HM^m_{\pkt{x}_0} \cdots d\HM^m_{\pkt{x}_{m+1}} \,.
  \end{displaymath}
\end{defin}
The quantity $\DC(T)$ should be seen as a~generalization of the Menger
curvature to higher dimensions. It behaves in the same way as $R^{-1}$ under
scaling, i.e.
\begin{displaymath}
  \DC(\lambda T) = \lambda^{-1} \DC(T) \,.
\end{displaymath}
Notice that we always have $R^{-1}(x,y,z) > \DC(x,y,z)$. Furthermore, for a
class of roughly regular triangles $T=(x,y,z)$ (i.e. satisfying $\hmin(T) \ge
\eta \diam(T)$, where $\hmin(T)$ is the minimal height of $T$ and $\eta \in
(0,1]$ is some fixed number) the two quantities $\DC(T)$ and $R^{-1}(T)$ are
comparable up~to a~constant depending only on~$\eta$. However they are not
comparable, when considered on the family of all triangles so we cannot infer
finiteness of $\M_p(\gamma)$ from finiteness of $\E_p(\gamma)$.

The definition of $\DC$ is based on another notion of discrete curvature
$\DC_{\text{SvdM}}$ introduced by Strzelecki and von der Mosel
in~\cite{0911.2095} for $4$-tuples of points (tetrahedrons). Yet again,
the~quantities $\DC(T)$ and $\DC_{\text{SvdM}}(T)$ are comparable for the
class of roughly regular tetrahedrons, i.e. such~that $\hmin(T) \ge \eta
\diam(T)$ for some fixed $\eta \in (0,1]$.

In the proofs we will use the Jones' $\beta$-numbers. For a set $E \subset
\R^n$, for any $x \in \R ^n$ and $r \in \R $ we define
\[
\beta_E^m(x,r) := \inf_{H} \sup_{y \in E \cap \Ball(x,r)}\frac{d(y,x+H)}{r},
\]
where the infimum is taken over all $m$-hyperplanes $H$ in the
Grassmannian~$G(m,n)$. The quantity $\beta_E^m(x,t)$ measures, in a scale
invariant way, how well the set is approximated by hyperplanes in the ball
$\Ball(x,t)$. We omit the indices $E$ and $m$ if the choice of the set and its
dimension is clear from the context. The~relations between total Menger
curvature ($\M_2$) and double integral of $\beta$-numbers for rectifiable
curves and Ahlfors regular sets were investigated by Peter Jones, who never
published those results, however they are presented in Herve Pajot's book
\cite[chapter 3]{Pajot}.

% Local Variables:
% coding: iso-8859-2
% eval: (ispell-change-dictionary "american")
% mode: latex
% mode: flyspell
% End:

% LocalWords:  tuple Menger infimum Grassmannian Ahlfors Pajot Strzelecki Mosel
% LocalWords:  tuples indices

\section{Injective $C^{1,\alpha}$ curves have finite $\M_p$ curvature}

 The main step of the proof of the finiteness of $\M_p(\gamma)$ for  $C^{1,\alpha}$ curves
with $\alpha>1-2/p$ , is to show that one can control the angle between secants on small arcs
on the curves. This leads to the estimation of Menger curvature. 
\begin{thm}
  \label{converse}
  If $\Gamma: S_L \to \R^3$ is an injective arc-length parametrization of
  $\gamma$ and $\Gamma\in C^{1,\alpha}(S_L)$, where $\alpha>1-\frac{2}{p}$ for
  some $p>2$, then $\M_p(\gamma)$ is finite.
  %\begin{displaymath}
   % \M_p(\gamma) = \int_{S_L} \int_{S_L} \int_{S_L} R^{-p}(\Gamma(x), \Gamma(y), \Gamma(z))\ dx\ dy\ dz.
  %\end{displaymath} 
\end{thm}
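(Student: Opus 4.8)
The plan is to estimate the Menger curvature $R^{-1}(\Gamma(x),\Gamma(y),\Gamma(z))$ in terms of the arc-length distances between the three parameter points and then show the resulting triple integral converges under the hypothesis $\alpha > 1 - \frac{2}{p}$. The key geometric input is that for a $C^{1,\alpha}$ arc-length curve, the unit tangent satisfies $|\Gamma'(x) - \Gamma'(y)| \le C |x-y|_{S_L}^{\alpha}$, where $|x-y|_{S_L}$ denotes distance along the circle $S_L$. From this Hölder bound on the tangent one controls both the deviation of a chord from its endpoint tangent and the angle between two secants, which is exactly the quantity singled out in the remark preceding the statement.

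First I would record the elementary formula for Menger curvature in a form suited to estimation. Writing $R(a,b,c) = \frac{|a-b|\,|a-c|\,|b-c|}{4\,\HM^2(\conv(a,b,c))}$, and using that twice the area equals $|b-a|\,|c-a|\sin\theta$ where $\theta$ is the angle at the vertex $a$ between the secants $\overline{ab}$ and $\overline{ac}$, one gets
\begin{displaymath}
  R^{-1}(a,b,c) = \frac{2\sin\theta}{|b-c|} \le \frac{2\sin\theta}{\bigl|\,|a-b| - |a-c|\,\bigr|}\quad\text{or}\quad R^{-1}(a,b,c)\le \frac{C\,\theta}{|b-c|}\,.
\end{displaymath}
So everything reduces to bounding the angle $\theta$ subtended at one vertex. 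The second step, and the main obstacle, is to prove a quantitative estimate of the form $\sin\theta \le C\,h^{\alpha}$, where $h$ is the largest of the three pairwise arc-length distances, for triples of points lying on a short subarc. The idea is: by the fundamental theorem of calculus the secant direction $\frac{\Gamma(y)-\Gamma(x)}{|\Gamma(y)-\Gamma(x)|}$ is close to the average of $\Gamma'$ over $[x,y]$, hence within $O(|x-y|^{\alpha})$ of $\Gamma'(x)$ by the Hölder bound; comparing the two secants emanating from a common vertex then yields $\theta \le C h^{\alpha}$. One must also control the denominator $|\Gamma(y)-\Gamma(x)|$ from below by (a constant multiple of) the arc-length $|x-y|_{S_L}$ on small scales, which again follows because the tangent cannot turn much: there is a scale $\delta$ below which injectivity plus the tangent estimate force $|\Gamma(y)-\Gamma(x)| \ge \tfrac12|x-y|_{S_L}$.

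With these estimates in hand the local part of the integral is controlled by reducing $R^{-p}$ to a power of arc-length distances. Ordering the three points so that $x,y,z$ lie within arc-length $\delta$ and letting $u,v$ be the two gaps, the bound $R^{-1} \le C\,(u+v)^{\alpha}/(u+v) = C(u+v)^{\alpha-1}$ gives an integrand $\lesssim (u+v)^{p(\alpha-1)}$, and the triple integral over a neighborhood of the diagonal converges precisely when $p(\alpha - 1) + 1 > 0$, i.e. $\alpha > 1 - \tfrac2p$ (the $+1$ accounting for the one free integration after the two gap variables are integrated). For the global part, where at least two of the points are separated by more than $\delta$, injectivity and compactness guarantee $|\Gamma(x)-\Gamma(y)| \ge c(\delta) > 0$ uniformly, so $R^{-1}$ is bounded and the integral over this region is trivially finite. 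The hard part is thus the uniform Hölder control of the secant angle and the lower bound on chord length near the diagonal; once those are established, splitting the domain into the near-diagonal and far regions and summing a convergent integral finishes the proof.
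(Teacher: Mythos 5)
Your proposal follows essentially the same route as the paper: the H\"older/FTC estimate on secant directions (Lemma~\ref{holder}, packaged there as a double-cone inclusion $\Gamma([x,y])\subset D(\Gamma(x),\Gamma(y),\eta|x-y|^\alpha)$), the bi-Lipschitz lower bound on chords from injectivity and compactness (Lemma~\ref{bilip}), the law-of-sines bound for $R^{-1}$ with the \emph{long} chord in the denominator, and the identical near-diagonal/far splitting and power count (Lemma~\ref{top}). Two small points of care: the angle whose sine you divide by $|\Gamma(x)-\Gamma(y)|\gtrsim u+v$ must be the one at the \emph{middle} point (it is close to $\pi$, and its sine is controlled by the two small endpoint angles), and your displayed convergence condition should read $p(\alpha-1)+1>-1$, which is the inequality that actually yields $\alpha>1-\frac{2}{p}$.
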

We start with an easy lemma, which shows that the parametrization is bi-lipschitz. 
\begin{lem}
\label{bilip}
If $\Gamma: S_L \to \R^3$ is injective arc-length parametrization of $\gamma$, such that 
\[
|\Gamma(x) - \Gamma(y)| \le C |x-y|^\alpha,
\] 
then there exists a constant 
$\lambda = \lambda(C,\alpha,L) $, such that
\[
|\Gamma(x) - \Gamma(y)| > \lambda |x-y|
\]
\end{lem}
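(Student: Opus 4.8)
The upper bi-Lipschitz bound $|\Gamma(x)-\Gamma(y)| \le |x-y|$ is automatic, since for an arc-length parametrization the chord joining two points never exceeds the length of the shorter arc between them; so the whole content of the lemma is the lower bound. The plan is to prove it in two regimes: a quantitative estimate near the diagonal $\{x=y\}$ coming from the $C^{1,\alpha}$ structure, and a soft compactness estimate away from the diagonal coming from injectivity.

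Near the diagonal I would exploit that $\Gamma'$ exists, satisfies $|\Gamma'|\equiv 1$, and is $\alpha$-H\"older with some constant $C$ (available since $\Gamma\in C^{1,\alpha}$). Writing the displacement as an integral of the tangent,
\[
\Gamma(y)-\Gamma(x) = (y-x)\,\Gamma'(x) + \int_x^y \bigl(\Gamma'(u)-\Gamma'(x)\bigr)\,du \,,
\]
and using $|\Gamma'(u)-\Gamma'(x)|\le C|u-x|^\alpha$ together with $|\Gamma'(x)|=1$, one obtains
\[
|\Gamma(y)-\Gamma(x)| \ \ge\ |y-x| - \frac{C}{1+\alpha}\,|y-x|^{1+\alpha} \,.
\]
Hence there is a threshold $\delta_0 = \delta_0(C,\alpha,L)$ for which $|\Gamma(y)-\Gamma(x)| \ge \tfrac12|y-x|$ whenever $|x-y|\le\delta_0$; this handles short arcs with a constant depending only on $C$, $\alpha$ and $L$.

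For $|x-y|\ge\delta_0$ the curve need not be close to a straight segment, so I would instead use that $\Gamma$ is a continuous injection of the compact set $S_L$, hence a homeomorphism onto its image. Consequently $(x,y)\mapsto|\Gamma(x)-\Gamma(y)|$ is continuous and strictly positive on the compact set $\{(x,y):\delta_0\le|x-y|\le L/2\}$, so it attains there a positive minimum $m>0$. Since $|x-y|\le L/2$ always, this gives $|\Gamma(x)-\Gamma(y)|\ge m \ge \tfrac{2m}{L}|x-y|$ on the far region, and $\lambda=\min\{\tfrac12,\tfrac{2m}{L}\}$ finishes the argument.

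The delicate point is the provenance of the constant. The short-arc bound is genuinely uniform, depending only on $C$, $\alpha$ and $L$, but the far-region constant $m$ is produced by compactness and in principle depends on the particular curve: a sequence of injective curves sharing the same $C,\alpha,L$ can nearly self-intersect along two almost parallel strands (whose tangents nearly coincide, so the H\"older bound on $\Gamma'$ imposes no restriction) and force $m\to 0$. Thus $\lambda$ should be read as depending on $\Gamma$ through $m$; it is the short-arc estimate -- the one actually fed into the bound on the Menger curvature of small triples in Theorem~\ref{converse} -- that is controlled purely by $C$, $\alpha$ and $L$. I expect this bookkeeping of the two constants, rather than either estimate in isolation, to be the main thing to get right.
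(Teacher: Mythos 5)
Your proof is correct and follows essentially the same route as the paper's: a quantitative lower bound near the diagonal coming from the H{\"o}lder continuity of $\Gamma'$ (the paper projects onto the direction of $\Gamma'(x)$ and integrates, you expand $\Gamma(y)-\Gamma(x)$ around $(y-x)\Gamma'(x)$ --- equivalent estimates), combined with a compactness-plus-injectivity argument on $\{(x,y): |x-y|\ge\delta\}$ for the far region. Your closing remark is also on point: the far-region constant produced by compactness depends on the particular curve and not only on $C$, $\alpha$, $L$, so the dependence advertised in the statement is slightly imprecise --- the same issue is present in the paper's own proof and is harmless for the application, which only needs some positive $\lambda$ for the given curve.
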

For the convenience of the reader
we repeat the proof, which can be found in \cite{MR2489022}.
\begin{proof} 
First we show that $\Gamma$ is uniformly locally bi-lipschitz, i.e.
for given $c_1\in(0,1)$
\[
\exists_{\delta>0} \ \textrm{such that} \ \forall_{x,y \in S_L} \ |x-y| < \delta \   \Rightarrow
\ |\Gamma(x) - \Gamma(y)| > c_1 |x-y| 
\]
We take $\delta := \Big(\frac{1-c_1}{C}\Big)^{1/\alpha}$, then for all $|x-y| < \delta$
\[
|\Gamma'_1(x) - \Gamma'_1(y)| < |\Gamma'(x) - \Gamma'(y)| < C |x-y|^\alpha < 1-c_1.
\]
Without loss of generality we can assume that $\Gamma'(x) = [1,0,0]$ and for $t$ satisfying $|x-t| <\delta $ we have
\[
\Gamma'_1(x) - |\Gamma'_1(x) - \Gamma'_1(t)| > c_1 \quad \textrm{and \ thus} \quad 
\Gamma'_1(t)> c_1. 
\]
Using absolute continuity of $\Gamma$ we get
\[
|\Gamma(x) - \Gamma(y)| = \big|\int_x^y \Gamma'(t) dt \big| \ge \big|\int_x^y \Gamma'_1(t) dt \big|> c_1 |x-y| 
\]
To finish the proof it is enough to notice that the set
\[
A_\delta := \{(x,y) \in S_L \ | \ |x-y| \ge \delta\}
\]
is compact, thus injectivity and continuity of $\Gamma$ implies that $\inf_{A_\delta}|\Gamma(x) - \Gamma(y)| = a >0 $, therefore
for $|x-y| \ge \delta$ we have 
\[
|\Gamma(x) - \Gamma(y)| \ge \frac{a}{L}|x-y|.
\]
\end{proof}

To formulate next the lemma we need to introduce geometric objects we will use.  Let $C^+(P,\wek{v},\alpha)$ be a "half cone" with a
vertex P, an axis parallel to the given vector $\wek{v}$ and an opening angle
$\alpha$
\begin{displaymath}
  C^+(P,\wek{v},\alpha) := \left\{ P+x  :  |\varangle(\wek{v},x)| < \frac{\alpha}{2} \right\} \,.
\end{displaymath}
Intersection of two half-cones with vertices $P$ and $Q$ and common axis $PQ$
and opening angle~$\alpha$ (see fig. 1)
will be denoted as follows
\begin{displaymath}
  D(P,Q,\alpha) := C^+(P,\wek{PQ}, \alpha) \cap C^+(Q,\wek{QP}, \alpha)
\end{displaymath}  
\begin{center}
  \includegraphics[width=0.5\textwidth]{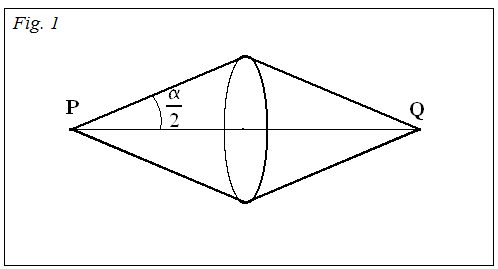}
\end{center}

Now we are ready to formulate and prove the lemma.
\begin{lem}
  \label{holder}
  Let $\Gamma : S_L \to \R^3 $ be injective,  and 
\[
|\Gamma'(x) - \Gamma'(y)| \le C |x-y|^\alpha \quad \textrm{for all} \quad x,y \in S_L
\]
 Then for any $x,y \in S_L$ satisfying $\frac{5}{2}C|x-y|^\alpha <
  1$ the following inclusion holds
  \begin{displaymath}
    \Gamma([x,y]) \subset D(\Gamma(x),\Gamma(y), \eta|x-y|^\alpha),
  \end{displaymath}
  where $\eta$ is a constant which depends only on $C$.
\end{lem}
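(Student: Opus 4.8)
The plan is to reduce the claimed inclusion to two angle estimates, one at each vertex of the diamond, and then to combine them using the fact that the angular distance $\varangle(\cdot,\cdot)$ between directions obeys the triangle inequality. Write $h = |x-y|$ and assume without loss of generality that $x < y$ and $t \in [x,y]$; set $v = \Gamma'(x)$, so $|v| = 1$. Unwinding the definition of $D(\Gamma(x),\Gamma(y),\eta h^\alpha)$, the point $\Gamma(t)$ lies in the diamond precisely when
\begin{displaymath}
  \varangle\bigl(\wek{\Gamma(x)\Gamma(y)},\, \Gamma(t)-\Gamma(x)\bigr) < \tfrac{\eta}{2} h^\alpha
  \quad\text{and}\quad
  \varangle\bigl(\wek{\Gamma(y)\Gamma(x)},\, \Gamma(t)-\Gamma(y)\bigr) < \tfrac{\eta}{2} h^\alpha \,.
\end{displaymath}
Since both the chord direction $\wek{\Gamma(x)\Gamma(y)}$ and the secant direction $\Gamma(t)-\Gamma(x)$ will be shown to lie close to $v$, the first inequality will follow from the triangle inequality once I control each of them against $v$; the second is handled symmetrically against $-\Gamma'(y)$.

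For the estimate based at $\Gamma(x)$ I would use the representation
\begin{displaymath}
  \Gamma(t)-\Gamma(x) = (t-x)v + E, \qquad E = \int_x^t \bigl(\Gamma'(s)-v\bigr)\,ds,
\end{displaymath}
which is valid by absolute continuity of $\Gamma$. The H\"older hypothesis gives $|\Gamma'(s)-v| \le C(s-x)^\alpha \le Ch^\alpha$, hence $|E| \le C h^\alpha (t-x)$. Decomposing $E = E_\parallel v + E_\perp$ with $E_\perp \perp v$, the angle $\theta = \varangle(v,\Gamma(t)-\Gamma(x))$ satisfies $\tan\theta = |E_\perp|/\bigl((t-x)+E_\parallel\bigr)$. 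Here the assumption $\tfrac{5}{2}Ch^\alpha < 1$ is exactly what is needed: it forces $|E| < \tfrac{2}{5}(t-x)$, so the denominator stays positive, $(t-x)+E_\parallel > \tfrac{3}{5}(t-x)$, and therefore $\theta \le \tan\theta \le \tfrac{5}{3}Ch^\alpha$. Taking $t = y$ gives the same bound for $\varangle(v, \wek{\Gamma(x)\Gamma(y)})$, and the triangle inequality then yields the first cone condition with room to spare.

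Repeating the argument based at $\Gamma(y)$—now expanding $\Gamma(t)-\Gamma(y)$ and $\Gamma(x)-\Gamma(y)$ about the direction $-\Gamma'(y)$—produces the analogous bound for the second cone. Collecting constants, the angle between $\Gamma(t)-\Gamma(x)$ and the chord is at most $\tfrac{10}{3}Ch^\alpha$, so any $\eta \ge 7C$ makes both strict inequalities hold; since this $\eta$ does not depend on $\alpha$ (we used only $1+\alpha \ge 1$ when discarding the harmless factor $1/(1+\alpha)$ from $|E|$), it depends on $C$ alone, as required. I expect the only genuinely delicate point to be keeping the forward component $(t-x)+E_\parallel$ bounded away from zero: this is what prevents the curve from folding back inside the diamond, and it is precisely the role of the quantitative smallness assumption $\tfrac{5}{2}C|x-y|^\alpha<1$.
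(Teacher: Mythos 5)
Your proposal is correct and follows essentially the same route as the paper: both proofs control the angle between a secant $\Gamma(t)-\Gamma(x)$ and the chord $\Gamma(y)-\Gamma(x)$ by comparing each to the tangent direction via the integral representation $\Gamma(t)-\Gamma(x)=\int_x^t\Gamma'(s)\,ds$ and the H\"older bound on $\Gamma'$, then applying the angular triangle inequality at both vertices of the diamond. The only difference is bookkeeping — you extract the angle from a parallel/perpendicular decomposition and a $\tan\theta$ bound, while the paper measures the chordal distance between the normalized secant vectors and uses a projection-onto-the-sphere trick to pick up a factor of $2$; both yield $\eta$ as an absolute multiple of $C$.
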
 

\begin{proof}[Proof of Lemma \ref{holder}]
% As $\Gamma \in C^{1,\alpha}(S_L)$, there exists a constant $C$ such that
% \begin{displaymath}
%   |\Gamma'(x) - \Gamma'(y)| \le C|x-y|^\alpha.
% \end{displaymath}
  Let $x,y,z \in S_L$ be such that $x < z < y$. We are going to estimate the
  angle between two secant lines: one passing through $\Gamma(x)$ and
  $\Gamma(y)$ and the other one passing through $\Gamma(x)$ and $\Gamma(z)$.
  First, note that for any two unit vectors $u,v \in S^{n-1}$ forming a small
  angle $\varangle(u,v) = \theta$ we have $\theta \simeq |u-v|$. Hence, it
  suffices to estimate the difference $\frac{\Gamma(x) - \Gamma(y)}{|\Gamma(x)
    - \Gamma(y)|}$ and $\frac{\Gamma(x) - \Gamma(z)}{|\Gamma(x) -
    \Gamma(z)|}$. Let us calculate
  \begin{multline*}
    \left|
      \frac{\Gamma(x) - \Gamma(y)}{|\Gamma(x) - \Gamma(y)|}
      - \frac{\Gamma(x) - \Gamma(z)}{|\Gamma(x) - \Gamma(z)|}
    \right|
    \le
    \left|\frac{\Gamma(x) - \Gamma(y)}{|\Gamma(x) - \Gamma(y)|} - \Gamma'(x)\right| \\
    + \left|\frac{\Gamma(x) - \Gamma(z)}{|\Gamma(x) - \Gamma(z)|} -\Gamma'(z)\right|
    + |\Gamma'(x) - \Gamma'(z)| \,.
  \end{multline*}
  We have
  \begin{align*}
    \left|\Gamma'(x)- \frac{\Gamma(x) - \Gamma(y)}{|\Gamma(x) - \Gamma(y)|} \right|
    &\le \left|\Gamma'(x) - \frac{\Gamma(x) - \Gamma(y)}{|x-y|}\right|  
    +  \left|\frac{\Gamma(x) - \Gamma(y)}{|x-y|} - \frac{\Gamma(x) - \Gamma(y)}{|\Gamma(x) - \Gamma(y)|}\right| \\
    &\le  2 \left|\Gamma'(x) - \frac{\Gamma(x) - \Gamma(y)}{|x-y|}\right| \,,
  \end{align*}
  where the last inequality holds, because the distance between vector $v =
  \frac{\Gamma(x) - \Gamma(y)}{|x-y|}$ and its projection onto the unit sphere
  $\frac{\Gamma(x) - \Gamma(y)}{|\Gamma(x) - \Gamma(y)|}$ is not greater than
  the~distance between $v$ and any arbitrarily chosen unit vector (recall that
  $\Gamma$ is an arc-length parametrization, so $|\Gamma'| \equiv 1$).
  \begin{align*}
    \left|\Gamma'(x) - \frac{\Gamma(x) - \Gamma(y)}{|x-y|}\right| 
    &= \left| \Gamma'(x) - \frac{1}{y - x} \int_{[x,y]} \Gamma'(s)\ ds\right| \\
    &\le \frac{1}{y - x} \int_{[x,y]}| \Gamma'(x) - \Gamma'(s)|\ ds \le \frac{1}{y - x} \int_{[x,y]} C(s-x)^\alpha\ ds \\ 
    &\le C(y-x)^\alpha \,.
  \end{align*}
  Combining the above inequalities we obtain
  \begin{displaymath}
    \left|
      \frac{\Gamma(x) - \Gamma(y)}{|\Gamma(x) - \Gamma(y)|}
      - \frac{\Gamma(x) - \Gamma(z)}{|\Gamma(x) - \Gamma(z)|}
    \right| 
    \le 2C|x-y|^\alpha + 2C|x-z|^\alpha + C|x-z|^\alpha \le 5C|x-y|^\alpha \,.
  \end{displaymath} 
  This implies that $\Gamma[(x,y)]$ is included in the cone 
  \begin{displaymath}
    C\left(
      \Gamma(x), \Gamma(y) - \Gamma(x),2\arcsin\big(\tfrac 52 C|x-y|^\alpha\big)
    \right) \,.
  \end{displaymath}
  Analogously 
  \begin{displaymath}
    \Gamma[(x,y)]\in C
    \left(
      \Gamma(y), \Gamma(x) - \Gamma(y),2\arcsin\big(\tfrac 52 C|x-y|^\alpha\big)
    \right) \,.
  \end{displaymath}
  Thus 
  \begin{displaymath}
    \Gamma[(x,y)]\in D\left(\Gamma(x),\Gamma(y),\eta|x-y|^\alpha \right) \,.
  \end{displaymath}
\end{proof}

The lemma proven above gives an estimation for Jones $\beta$-numbers; it is easy to notice that there 
exists $R_0$, such that for $r<R_0$ we have
\[
\beta(x,r) < r^{1+\alpha}.
\]
In case of plane curves it is possible to use the estimation to prove the finiteness of $\M_p$,
(one can modify the reasoning from \cite{Pajot}), but it is not clear if it is possible to adapt it to curves in 
$\R ^3$ and even in 2-space the arguing is long and complicated. Here, Lemma \ref{holder} gives us  additional information. We not only know that the curve is close to a line in a small ball, but we can also point out the line.
This information makes the proof of the finiteness of $\M_p$ much easier, as the following lemma holds. 

\begin{lem}
  \label{top}
  Let $\Gamma:S_L \to \R^3 $ be an injective, arc-length parametrization of a
  curve $\gamma$ and let $p>2$ and $\alpha > 1-\frac {2}{p}$. If there exist
  constants $\eta$ and $\varepsilon>0 $ such that $\eta \varepsilon^\alpha<\frac{\pi}{2}$ and for each $|x-y|<\varepsilon$ we have
  \begin{equation}
  \label{cukierek}
    \Gamma((x,y)) \subset D(\Gamma(x), \Gamma(y), \eta |x - y|^{\alpha}) \,,
  \end{equation}
  then $\M_p(\gamma)<\infty$.
\end{lem}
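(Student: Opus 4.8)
The plan is to bound the integrand $R^{-p}$ pointwise by means of the diamond condition~\eqref{cukierek}, and then to verify that the resulting majorant is integrable on $S_L^3$ exactly when $\alpha > 1-\frac2p$.

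First I would translate the Menger curvature into an angle estimate. Fix three parameters with $x<z<y$ and $y-x<\varepsilon$; by~\eqref{cukierek} the point $\Gamma(z)$ lies in $D(\Gamma(x),\Gamma(y),\eta|x-y|^\alpha)$, and by the definition of $D$ as an intersection of two half-cones this means that in the triangle $\conv(\Gamma(x),\Gamma(z),\Gamma(y))$ the angles at the two outer vertices $\Gamma(x)$ and $\Gamma(y)$ are each smaller than $\tfrac12\eta|x-y|^\alpha$. Writing $\beta$ for the angle at the middle vertex $\Gamma(z)$, the law of sines (equivalently Definition~\ref{menger}) gives
\[
  R^{-1}(\Gamma(x),\Gamma(z),\Gamma(y)) = \frac{2\sin\beta}{|\Gamma(x)-\Gamma(y)|},
\]
and since $\sin\beta=\sin(\pi-\beta)$ equals the sine of the sum of the two outer angles, while that sum stays below $\eta|x-y|^\alpha\le\eta\varepsilon^\alpha<\tfrac\pi2$, we obtain $\sin\beta\le\eta|x-y|^\alpha$. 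Combining this with the lower bi-Lipschitz bound $|\Gamma(x)-\Gamma(y)|>\lambda|x-y|$ of Lemma~\ref{bilip} --- applicable because an arc-length parametrization is $1$-Lipschitz and hence meets that lemma's hypothesis --- yields
\[
  R^{-1}(\Gamma(x),\Gamma(z),\Gamma(y)) \le \frac{2\eta}{\lambda}\,|x-y|^{\alpha-1}
  \qquad\text{whenever } x<z<y,\ y-x<\varepsilon.
\]

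Next I would split $S_L^3$ into the clustered part $U$, consisting of triples contained in a common arc of length less than $\varepsilon$, and its complement. On the complement the three points cannot all lie in a short arc, so by the compactness and injectivity argument behind Lemma~\ref{bilip} at least one pair of them has Euclidean distance bounded below by a fixed $c_0>0$; since the circumradius is at least half the longest side, $R^{-1}\le 2/c_0$ there, and this bounded integrand over a set of measure at most $L^3$ contributes only a finite amount. On $U$ the integrand is symmetric in $x,y,z$, so up to the symmetry factor $6$ it suffices to integrate over the ordered set $\{u_1<u_2<u_3<u_1+\varepsilon\}$, where $u_2$ plays the role of the middle point. Inserting the pointwise bound, integrating out $u_2\in(u_1,u_3)$ (a factor $u_3-u_1$), then $u_1\in S_L$ (a factor $L$), and substituting $s=u_3-u_1$, the clustered contribution is controlled by
\[
  \Big(\tfrac{2\eta}{\lambda}\Big)^{p} L \int_0^\varepsilon s^{\,p(\alpha-1)+1}\,ds,
\]
which is finite precisely when $p(\alpha-1)+1>-1$, that is, when $\alpha>1-\tfrac2p$ --- exactly the standing hypothesis.

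The geometric heart of the argument is the single pointwise inequality $R^{-1}\le\frac{2\eta}{\lambda}|x-y|^{\alpha-1}$, which is an immediate consequence of~\eqref{cukierek} and Lemma~\ref{bilip}; once it is in hand, everything else is bookkeeping. Consequently the step I expect to require the most care is not any delicate estimate but the exponent accounting in the clustered integral, together with making the clustered/spread dichotomy precise on the circle $S_L$ --- in particular checking that ``not contained in an arc of length below $\varepsilon$'' really forces a uniformly positive pairwise distance, which is where injectivity, compactness and Lemma~\ref{bilip} enter.
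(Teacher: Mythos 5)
Your proof is correct and follows essentially the same route as the paper's: the pointwise bound $R^{-1}(\Gamma(x),\Gamma(z),\Gamma(y))\le \frac{2\eta}{\lambda}|x-y|^{\alpha-1}$ obtained from the diamond condition~\eqref{cukierek} together with the bi-Lipschitz estimate of Lemma~\ref{bilip}, followed by the near/far splitting of $S_L^3$ and the computation showing integrability of $s^{p(\alpha-1)+1}$ exactly when $\alpha>1-\frac2p$. The only (shared) blemish is the appeal to Lemma~\ref{bilip}, whose proof actually uses the H{\"o}lder continuity of $\Gamma'$ rather than of $\Gamma$, so strictly speaking bi-Lipschitzness is imported from the context in which Lemma~\ref{top} is applied --- exactly as in the paper's own argument.
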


\begin{proof}[Proof of Lemma \ref{top}]
  We start with a simple geometric observation. If $S\in D(P,Q,\beta)$  and $0< \beta <\frac{\pi}{2}$ then
  \begin{displaymath}
    c(P,Q,S) = \frac 1{R(P,Q,S)} \le \frac{2\sin{\beta}}{|PQ|}.
  \end{displaymath}
 Thus, from \eqref{cukierek}, for $|x-y|<\varepsilon$ and $z\in [x,y]$
  we have 
  \begin{displaymath}
    c(\Gamma(x),\Gamma(y),\Gamma(z)) 
    \le \frac{2\sin\left(\eta |x - y|^\alpha \right)}{|\Gamma(x)-\Gamma(y)|} \le  \frac{2\eta |x - y|^\alpha}{|\Gamma(x)-\Gamma(y)|}
  \end{displaymath}
 
  As we know from Lemma \ref{bilip}, $\Gamma$ is bi-lipschitz, hence there exists a constant $d>0$
  such that
  \begin{equation}
    \label{oszac}
    c(\Gamma(x),\Gamma(y),\Gamma(z)) \le d |x-y|^{\alpha - 1}.
  \end{equation}
  Now we are ready to estimate the triple integral
  \begin{align*}
    \M_p(\gamma) &= \int_{S_L} \int_{S_L} \int_{S_L} c^p(\Gamma(x),\Gamma(y), \Gamma(z))\ dx\ dy\ dz \\
    &\le C \int_{S_L} \int_{\{y\in S_L \ | \ |x-y|<\varepsilon\}} \int_{[x,y]} c^p(\Gamma(x),\Gamma(y), \Gamma(z))\ dx\ dy\ dz \\
    &+ \int_{S_L} \int_{\{y\in S_L \ | \ |x-y|>\varepsilon\}} \int_{S_L} c^p(\Gamma(x),\Gamma(y), \Gamma(z))\ dx\ dy\ dz \\
    &\stackrel{\eqref{oszac}}\le C \int_{S_L} \int_{\{y\in S_L \ | \ |x-y|<\varepsilon\}} |x-y|^{p({\alpha-1})}\cdot|x-y|\ dx\ dy \\ 
    &+ \int_{S_L} \int_{\{y\in S_L \ | \ |x-y|>\varepsilon\}} \int_{S_L} \varepsilon^p\ dx\ dy\ dz \\
    &\le C \int_{S_L} \int_{\{y\in S_L \ | \ |x-y|<\varepsilon\}} |x-y|^{p\alpha - p +1}\ dx\ dy + \textrm{const} 
    < \infty \,,
  \end{align*}
  as $\alpha > 1-\frac{2}{p}$.
\end{proof}

The proof of Theorem \ref{converse} follows immediately from the Lemmas \ref{holder} and 
\ref{top}.

\begin{proof}[Proof of Theorem \ref{converse}]
  As $\Gamma$ satisfies assumption of Lemma \ref{holder},
  we know that there exists $\varepsilon>0$ such that if $|x-y|\le \varepsilon$ then
  \begin{displaymath}
    \Gamma([x,y]) \subset D(\Gamma(x),\Gamma(y), {\rm d}|x-y|^\alpha) \, ,
  \end{displaymath}
  where $d$ is a constant which depends only on H{\"o}lder constant $C$.
  Thus using Lemma \ref{top} we obtain the thesis.
\end{proof}

% Local Variables:
% coding: iso-8859-2
% eval: (ispell-change-dictionary "american")
% mode: latex
% mode: flyspell
% End:

% LocalWords:  injectivity injective Lipschitz vertices

\section{Manifolds of class $C^{1,\alpha}$ have finite integral curvature}
In this section we prove a counterpart of Theorem~\ref{converse} for
$m$-dimensional submanifolds of $\R^n$.
\begin{thm}
  \label{thm:fin-curv}
  Let $p > m(m+1)$ be some number and let $\Sigma \subseteq \R^n$ be a compact
  manifold of class $C^{1,\alpha}$. If $\alpha > 1 - \frac{m(m+1)}p$ then
  $\E_p(\Sigma)$ is finite.
\end{thm}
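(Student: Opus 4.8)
The plan is to mimic the one-dimensional argument of Theorem~\ref{converse}, replacing secant lines by tangent planes and the diamond region $D$ by an analogous ``flatness'' neighborhood adapted to an $(m+2)$-tuple of points. The overall strategy is: establish that $\Sigma$ is locally a Lipschitz graph over its tangent plane with good H\"older control on the Gauss map, use this to bound the discrete curvature $\DC(T)$ on small tuples by a power of the diameter, and then integrate.

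First I would exploit compactness and the $C^{1,\alpha}$ hypothesis to reduce to a local model. Since $\Sigma$ is compact, it can be covered by finitely many charts in which $\Sigma$ is the graph of a $C^{1,\alpha}$ function $f : U \subseteq \R^m \to \R^{n-m}$ with $U$ open, $f(0)=0$, $Df(0)=0$, and $|Df(\pkt{x}) - Df(\pkt{y})| \le C|\pkt{x}-\pkt{y}|^\alpha$. In such a chart the analogue of Lemma~\ref{holder} is a flatness estimate: for points $\pkt{x}_0,\ldots,\pkt{x}_{m+1}$ lying in a small ball of radius $r$, the graph points are contained in a slab of thickness $O(r^{1+\alpha})$ about the affine $m$-plane through their ``base'' projections, because the graph deviates from its tangent plane by at most $C r^{1+\alpha}$ (integrating the H\"older bound on $Df$). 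The key quantitative step is to estimate $\HM^{m+1}(\conv T)$. The convex hull of $m+2$ points spanning an $(m+1)$-simplex has $(m+1)$-volume equal to $\frac{1}{(m+1)!}$ times the product of a base $m$-volume and a height; when the points are nearly coplanar, that height is at most the slab thickness $O(r^{1+\alpha})$, while the base $m$-volume is $O(r^m)$. Hence $\HM^{m+1}(\conv T) \le C r^{m} \cdot r^{1+\alpha} = C r^{m+1+\alpha}$, and since $\diam(T)$ is comparable to $r$ in the worst case we get
\begin{displaymath}
  \DC(T) = \frac{\HM^{m+1}(\conv T)}{\diam(T)^{m+2}} \le C\, r^{m+1+\alpha} r^{-(m+2)} = C\, r^{\alpha - 1} \,.
\end{displaymath}

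With this pointwise bound in hand, the finiteness of $\E_p(\Sigma)$ reduces to a convergence estimate, exactly parallel to the proof of Lemma~\ref{top}. Writing $\pkt{x}_0$ as a base point and letting $r = \diam(T)$, I would dyadically decompose the region where the remaining $m+1$ points lie within distance $r$ of $\pkt{x}_0$. Each of the factors $d\HM^m_{\pkt{x}_j}$ over $\Sigma \cap \Ball(\pkt{x}_0, r)$ contributes $\HM^m(\Sigma \cap \Ball(\pkt{x}_0,r)) \le C r^m$ (local Ahlfors regularity, which follows from the graph structure). Integrating $\DC(T)^p \le C r^{p(\alpha-1)}$ against the $m+1$ remaining point-measures and one more $d\HM^m_{\pkt{x}_0}$ on compact $\Sigma$, the dangerous near-diagonal contribution behaves like
\begin{displaymath}
  \int_0 r^{p(\alpha-1)} \cdot r^{m(m+1)} \, \frac{dr}{r} \,,
\end{displaymath}
which converges precisely when $p(\alpha-1) + m(m+1) > 0$, i.e. $\alpha > 1 - \frac{m(m+1)}{p}$. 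The contribution from tuples with $\diam(T)$ bounded below is finite by compactness and the fact that $\DC$ is bounded there.

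The main obstacle I anticipate is the geometric heart of the volume bound, namely controlling $\HM^{m+1}(\conv T)$ uniformly over \emph{all} configurations of $m+2$ points in a small ball, including degenerate ones. Unlike the curve case, where the cone/diamond geometry directly bounds the single relevant angle, here the simplex can be flat for many different reasons: its base $m$-simplex may itself be nearly degenerate, so that $\diam(T)$ is not comparable to the inradius of the base. The clean factorization ``base $\times$ height'' must therefore be replaced by a more careful argument showing that $\HM^{m+1}(\conv T)$ is always at most the base $m$-volume times the maximal coordinate-height deviation, and that the base $m$-volume is itself controlled by $\diam(T)^m$. Establishing this cofactor inequality and correctly tracking how $\diam(T)$ enters is the step where the dimensional exponent $m(m+1)$ (rather than $m(m+2)$) emerges, and it is where the estimate must be done with care to obtain sharpness of $\alpha_0$.
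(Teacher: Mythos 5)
Your overall architecture coincides with the paper's: a local flatness estimate $\beta(a,r)\le Cr^{\alpha}$ derived from the $C^{1,\alpha}$ graph representation (the paper's Lemma~\ref{lem:beta-est}), a pointwise bound $\DC(T)\le C\,\beta(\pkt{x}_0,\diam T)/\diam T\le C(\diam T)^{\alpha-1}$, and a dyadic decomposition in $\diam T$ combined with local Ahlfors regularity $\HM^m(\Sigma\cap\Ball(\pkt{x}_0,r))\le Cr^m$, yielding exactly the convergence condition $p(\alpha-1)+m(m+1)>0$. The first and third steps are correct as you sketch them, and match the paper's proof.

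The gap is the middle step, the one you flag in your final paragraph but do not close. What you need is
\begin{displaymath}
  \HM^{m+1}(\conv T)\ \le\ C(n,m)\,h\,d^{m},
  \qquad h:=d\,\beta(\pkt{x}_0,d),\quad d=\diam T,
\end{displaymath}
uniformly over all configurations. Your first derivation of it --- the height of the simplex over its base face is at most the slab thickness $O(r^{1+\alpha})$ --- is genuinely false: if the base $m$-simplex $\conv(\pkt{x}_0,\dots,\pkt{x}_m)$ is nearly degenerate, its affine span can be tilted far from the approximating plane $H$ while all vertices still lie in the slab, and then $\dist(\pkt{x}_{m+1},\aff\{\pkt{x}_0,\dots,\pkt{x}_m\})$ can be of order $d$ rather than $h$. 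You correctly diagnose this, but ``a more careful argument'' is precisely the content of the paper's Lemma~\ref{lem:dc-est}, and it requires an actual proof rather than an acknowledgement. The paper's device is this: place $\pkt{x}_0$ at the origin, let $V=\lin\{\pkt{x}_1,\dots,\pkt{x}_{m+1}\}$ (assumed $(m+1)$-dimensional, else the volume vanishes), and let $S$ be the cube of side $2h$ in $V^{\perp}$. Then $\HM^n(T+S)=\HM^{m+1}(T)\,(2h)^{n-m-1}$ by orthogonality, while $T+S$ is contained in a box with $m$ sides of length $\sim d$ (the directions of $H$) and $n-m$ sides of length $\sim h$, so $\HM^n(T+S)\le C\,d^m h^{n-m}$; dividing gives the claim with no case analysis on degenerate bases. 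Without this lemma (or an equivalent substitute, e.g.\ a Gram-determinant estimate) your argument is incomplete at its geometric core. Note also that this is exactly where the exponent $m(m+1)$ is earned: the crude bound $\HM^{m+1}(\conv T)\le d^{m+1}/(m+1)!$ gives only $\DC(T)\le C/d$, and the resulting integral $\int_0 r^{-p}r^{m(m+1)}\,dr/r$ diverges for every $p>m(m+1)$, so the extra factor $\beta(\pkt{x}_0,d)\le Cd^{\alpha}$ is indispensable.
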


\begin{lem}
  \label{lem:beta-est}
  Let $\Sigma \subseteq \R^n$ be a compact manifold of class $C^{1,\alpha}$ for
  some $\alpha \in (0,1)$. Then there exist constants $R = R(\Sigma) > 0$ and $C
  = C(\Sigma) > 0$ such that for each $a \in \Sigma$ and each $r \le R$
  \begin{displaymath}
    \beta(a,r) \le C r^{\alpha} \,.
  \end{displaymath}
\end{lem}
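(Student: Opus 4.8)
The goal is to prove Lemma~\ref{lem:beta-est}, which asserts a scale-invariant $\beta$-number bound $\beta(a,r) \le C r^\alpha$ for a compact $C^{1,\alpha}$ manifold. The natural candidate for the approximating hyperplane in the definition of $\beta(a,r)$ is the affine tangent plane $a + T_a\Sigma$, so the plan is to show that near $a$ the manifold stays within distance $C r^{1+\alpha}$ of this affine tangent plane, which after dividing by $r$ (as in the definition of $\beta$) yields the claimed $C r^\alpha$.

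\begin{proof}[Proof of Lemma~\ref{lem:beta-est}]
  First I would use compactness and the $C^{1,\alpha}$ hypothesis to set up a uniform local graph description. Since $\Sigma$ is compact and $C^{1,\alpha}$, one can cover $\Sigma$ by finitely many coordinate neighborhoods in each of which $\Sigma$ is, after a rigid motion, the graph of a $C^{1,\alpha}$ function $f : U \subseteq \R^m \to \R^{n-m}$ with $f(0) = 0$ and $Df(0) = 0$ (choosing coordinates so that $T_a\Sigma$ is the horizontal $\R^m$). By compactness the H\"older constants of the gradients $Df$ and the sizes of the neighborhoods can be taken uniform, giving a single radius $R = R(\Sigma)$ and constant $K = K(\Sigma)$ with $|Df(u) - Df(u')| \le K|u - u'|^\alpha$ on each chart.

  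The key step is the pointwise height estimate. Fix $a \in \Sigma$ and pick the chart centered at $a$ with $Df(0)=0$. For any point $y = (u, f(u)) \in \Sigma \cap \Ball(a,r)$ with $|u|$ small, the distance of $y$ to the affine tangent plane $a + T_a\Sigma$ is exactly $|f(u)|$ (the vertical component). Writing $f(u) = \int_0^1 Df(tu)\,u\,dt$ and using $Df(0) = 0$ together with the H\"older bound gives
  \begin{displaymath}
    |f(u)| = \left| \int_0^1 \bigl(Df(tu) - Df(0)\bigr)\,u \,dt \right|
    \le |u| \int_0^1 K\,|tu|^\alpha \,dt
    = \frac{K}{1+\alpha}\,|u|^{1+\alpha} \,.
  \end{displaymath}
  Since $y \in \Ball(a,r)$ forces $|u| \le r$ (the horizontal displacement is bounded by the full displacement), we obtain $\dist(y, a + T_a\Sigma) = |f(u)| \le \frac{K}{1+\alpha}\, r^{1+\alpha}$.

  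Finally I would assemble these bounds into the $\beta$-number. Taking $H = T_a\Sigma$ as the competitor hyperplane in the infimum defining $\beta(a,r)$, the displayed estimate gives $\sup_{y \in \Sigma \cap \Ball(a,r)} \dist(y, a + H) \le \frac{K}{1+\alpha}\, r^{1+\alpha}$, hence
  \begin{displaymath}
    \beta(a,r) \le \frac{1}{r} \cdot \frac{K}{1+\alpha}\, r^{1+\alpha} = C\, r^{\alpha}
  \end{displaymath}
  with $C = \frac{K}{1+\alpha}$ independent of $a$ and $r \le R$, as required.
\end{proof}

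The only genuine obstacle is the \emph{uniformity} of the constants over all of $\Sigma$: one must ensure that a single $R$ and $K$ work simultaneously at every point $a$, and that for $r \le R$ the ball $\Ball(a,r)$ is entirely captured within one graph chart so that the purely vertical height computation is valid. This is handled by a standard compactness argument (a Lebesgue-number type selection of a uniform radius), so the proof is otherwise a routine consequence of the fundamental theorem of calculus applied to $Df$ together with its H\"older continuity.
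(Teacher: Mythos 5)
Your proof is correct and follows essentially the same route as the paper: a uniform $C^{1,\alpha}$ graph representation over the tangent plane obtained by compactness, the fundamental theorem of calculus combined with $Df(0)=0$ and the H\"older bound on $Df$ to get the height estimate $|f(u)| \le C|u|^{1+\alpha}$, and the affine tangent plane as the competitor in the infimum defining $\beta(a,r)$.
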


\begin{proof}
  Since $\Sigma$ is compact, we can find a radius $R > 0$ and a constant $C > 0$
  such that for each $a \in \Sigma$ there exists a function $f_a \in
  C^{1,\alpha}(T_a\Sigma \cap \Ball_{2R},T_a\Sigma^{\perp})$ such that
  \begin{displaymath}
    \Sigma \cap \Ball(a,R) \subseteq a + \graph(f_a) \,,
  \end{displaymath}
  \begin{displaymath}
    f_a(0) = 0 \,, \quad Df_a(0) = 0 \,,
  \end{displaymath}
  \begin{displaymath}
    \text{and} \quad \forall x,y \in \Ball^m_{2R} \quad |Df_a(x) - Df_a(y)| \le C |x-y|^{\alpha} \,.
  \end{displaymath}
  Fix some $a \in \Sigma$ and a radius $r \le R$. Let $b \in \Sigma \cap
  \Ball(a,r)$. Since $\Sigma \cap \Ball(a,R)$ is the graph of $f$, there exists
  a point $x \in T_a\Sigma$ such that $b = a + x + f_a(x)$. By the fundamental
  theorem of calculus we have
  \begin{align*}
    |f_a(x)| = |f_a(x) - f_a(0)| &= \left| \int_0^1 \tfrac{d}{dt}f_a(tx)\ dt \right| \\
    &\le |x| \sup_{y \in T_a\Sigma \cap \Ball_{|x|}} |Df_a(y) - Df_a(0)| \\
    &\le C |x|^{1 + \alpha} \le C |b-a|^{1 + \alpha} \,.
  \end{align*}
  Note that $|f_a(x)|$ is just the distance of $b$ from the affine plane $a +
  T_a\Sigma$. Hence
  \begin{displaymath}
    \sup_{b \in \Sigma \cap \Ball(a,r)} \dist(b, a + T_a\Sigma) \le C r^{1+\alpha}
  \end{displaymath}
  and we obtain
  \begin{align*}
    \beta(a,r) &= \frac 1r \inf_{H \in G(n,m)} \Big( \sup_{b \in \Sigma \cap \Ball(a,r)} \dist(b, a + H) \Big) \\
    &\le \frac 1r \sup_{b \in \Sigma \cap \Ball(a,r)} \dist(b, a + T_a\Sigma)
    \le C r^{\alpha} \,.
  \end{align*}
\end{proof}

\begin{lem}
  \label{lem:dc-est}
  Let $\Sigma \subseteq \R^n$ be an $m$-dimensional manifold. Choose $m+2$
  points $\pkt{x}_0$,\ldots,$\pkt{x}_{m+1}$ of $\Sigma$ and set $T =
  \conv(\pkt{x}_0,\ldots,\pkt{x}_{m+1})$ and $d = \diam(T)$. There exists a
  constant $C = C(m,n)$ such that
  \begin{displaymath}
    \HM^{m+1}(T) \le C \beta(\pkt{x}_0,d) d^{m+1} \,,
  \end{displaymath}
  hence
  \begin{displaymath}
    \DC(\pkt{x}_0,\ldots,\pkt{x}_{m+1}) \le C \frac{\beta(\pkt{x}_0,d)}{d} \,.
  \end{displaymath}
\end{lem}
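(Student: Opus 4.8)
The plan is to bound the $(m+1)$-dimensional volume of the simplex $T = \conv(\pkt{x}_0,\ldots,\pkt{x}_{m+1})$ by comparing it to the volume it would have if the points lay exactly on the best-approximating $m$-plane through $\pkt{x}_0$ at scale $d$, and then to exploit the fact that all the points are close to that plane, the gap being controlled by $\beta(\pkt{x}_0,d)$. Since all $m+2$ points lie within distance $d = \diam(T)$ of $\pkt{x}_0$, they all sit inside $\Ball(\pkt{x}_0,d)$, so the definition of the Jones $\beta$-number applies directly: there is an $m$-plane $H$ through $\pkt{x}_0$ such that every $\pkt{x}_i$ is within distance $\beta(\pkt{x}_0,d)\,d$ of the affine plane $\pkt{x}_0 + H$.

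The first step is to recall the elementary volume formula for a simplex in terms of its base and height. Writing the $(m+1)$-simplex $T$ with vertices $\pkt{x}_0,\ldots,\pkt{x}_{m+1}$, I would single out the $m$-dimensional face $F = \conv(\pkt{x}_0,\ldots,\pkt{x}_m)$ as a base and express
\begin{displaymath}
  \HM^{m+1}(T) = \frac{1}{m+1}\,\HM^m(F)\cdot \dist(\pkt{x}_{m+1}, \aff F) \,.
\end{displaymath}
The second step is to estimate each factor. The base $F$ is contained in $\Ball(\pkt{x}_0,d)$, so its $m$-dimensional area is at most the area of an $m$-simplex of diameter $d$, giving $\HM^m(F) \le C(m)\,d^m$. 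The key geometric step is the height estimate: since the face $F$ lies very close to the plane $\pkt{x}_0 + H$ and $\pkt{x}_{m+1}$ also lies within $\beta(\pkt{x}_0,d)\,d$ of that same plane, the distance of $\pkt{x}_{m+1}$ from the affine hull of $F$ cannot be much larger than the distance of these points from $\pkt{x}_0 + H$. More precisely, the affine plane $\aff F$ is a perturbation of $\pkt{x}_0 + H$, and one obtains $\dist(\pkt{x}_{m+1}, \aff F) \le C\,\beta(\pkt{x}_0,d)\,d$.

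The main obstacle I anticipate is exactly this last height bound, because $\aff F$ is \emph{not} the approximating plane $H$: the vertices of $F$ only lie near $H$, so $\aff F$ may be tilted relative to $H$. One must show that this tilt is itself controlled by $\beta$. The clean way to handle this is to choose coordinates so that $\pkt{x}_0 + H$ is a coordinate $m$-plane; then each point $\pkt{x}_i$ has a "vertical" component of size at most $\beta(\pkt{x}_0,d)\,d$ and a "horizontal" component of size at most $d$. Projecting $T$ orthogonally onto $H$ collapses all vertical components, and the height of $\pkt{x}_{m+1}$ above $\aff F$ is bounded by the total vertical extent of the five points, i.e. by $C\,\beta(\pkt{x}_0,d)\,d$, independently of how $\aff F$ is tilted (degenerate bases only help, since they decrease $\HM^m(F)$). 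Combining the two factors yields
\begin{displaymath}
  \HM^{m+1}(T) \le \frac{1}{m+1}\cdot C(m)\,d^m \cdot C\,\beta(\pkt{x}_0,d)\,d = C(m,n)\,\beta(\pkt{x}_0,d)\,d^{m+1} \,,
\end{displaymath}
and dividing by $d^{m+2}$ in the definition of $\DC$ gives the stated bound $\DC(\pkt{x}_0,\ldots,\pkt{x}_{m+1}) \le C\,\beta(\pkt{x}_0,d)/d$.
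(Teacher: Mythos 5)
Your overall plan (base--height factorization plus the $\beta$-number) is natural, but the key step fails as stated: the inequality $\dist(\pkt{x}_{m+1},\aff F)\le C\,\beta(\pkt{x}_0,d)\,d$ is simply not true in general, and your justification for it does not hold up. Take $m=1$, $n=2$, $h=\beta(\pkt{x}_0,d)\,d$, and the points $\pkt{x}_0=(0,0)$, $\pkt{x}_1=(0,h)$, $\pkt{x}_2=(d,0)$, all lying within distance $h$ of the horizontal plane $H$. Here $\aff F=\aff\{\pkt{x}_0,\pkt{x}_1\}$ is the \emph{vertical} axis and $\dist(\pkt{x}_2,\aff F)=d$, which is larger than the ``total vertical extent'' $h$ by the factor $d/h$. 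Projecting onto $H$ collapses vertical components of the \emph{points}, but it says nothing about the distance to $\aff F$ when $\aff F$ is nearly orthogonal to $H$ — and nothing prevents that, since the vertices of $F$ are only constrained to lie near $H$, not to be spread out along it. Your parenthetical remark that ``degenerate bases only help, since they decrease $\HM^m(F)$'' is exactly the point that needs proof: the lemma is really a statement about the \emph{product} $\HM^m(F)\cdot\dist(\pkt{x}_{m+1},\aff F)$, and the two factors cannot be bounded separately as you do (your two displayed bounds are asserted simultaneously, and the second one is false). Quantifying the trade-off — when the height is large the base must be small by a compensating factor of $\beta$ — is the actual content of the lemma, and it is missing.

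Two ways to close the gap. A direct repair of your approach is to avoid the base--height split altogether and use multilinearity: writing $\pkt{x}_i-\pkt{x}_0=u_i+v_i$ with $u_i\in H$, $|u_i|\le d$, and $v_i\perp H$, $|v_i|\le\beta(\pkt{x}_0,d)\,d$, expand the wedge product $(u_1+v_1)\wedge\cdots\wedge(u_{m+1}+v_{m+1})$; the all-$u$ term vanishes because $m+1$ vectors lie in the $m$-plane $H$, and every surviving term contains at least one $v_i$, so its norm is at most $2^{m+1}d^{m}\cdot\beta(\pkt{x}_0,d)\,d$, which gives $\HM^{m+1}(T)\le C(m)\,\beta(\pkt{x}_0,d)\,d^{m+1}$. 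The paper proceeds differently: it first disposes of the cases $\HM^{m+1}(T)=0$ and $\beta(\pkt{x}_0,d)\ge 1$, then thickens $T$ by a cube $S=[-h,h]^{n-m-1}$ placed in the orthogonal complement of $V=\aff\{\pkt{x}_0,\ldots,\pkt{x}_{m+1}\}$ and compares the two expressions for $\HM^{n}(T\times S)$: on one hand it equals $\HM^{m+1}(T)\,(2h)^{n-m-1}$, and on the other hand $T+S$ is contained in a box of dimensions comparable to $d$ in $m$ directions and to $h$ in the remaining $n-m$ directions, so $\HM^{n}(T\times S)\le C d^{m}h^{n-m}$; dividing out $h^{n-m-1}$ yields the claim. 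Either route works; yours as written does not.
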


Note that we assumed $\Sigma$ to be a manifold but the proof works also for an
arbitrary set $\Sigma$ of Hausdorff dimension $m$ or even for any set.

\begin{proof}
  If the affine space $\aff\{\pkt{x}_0, \ldots, \pkt{x}_{m+1}\}$ is not $(m+1)$-dimensional
  then $\HM^{m+1}(T) = 0$ and there is nothing to prove. Hence, we can assume
  that $T$ is an $(m+1)$-dimensional simplex. The measure $\HM^{m+1}(T)$ can be
  expressed by the formula
  \begin{displaymath}
    \HM^{m+1}(T) 
    = \frac{1}{m+1} \dist(\pkt{x}_{m+1}, \aff\{\pkt{x}_0,\ldots,\pkt{x}_m\})
    \HM^m(\conv(\pkt{x}_0,\ldots,\pkt{x}_m)) \,.
  \end{displaymath}
  In the same way, one can express the measure $\HM^m(\conv(\pkt{x}_0,\ldots,\pkt{x}_m))$, so
  certainly
  \begin{displaymath}
    \HM^{m+1}(T) \le \frac 1{(m+1)!} d^{m+1} \,.
  \end{displaymath}
  Hence, if $\beta(\pkt{x}_0,d) = 1$, then there is nothing to prove, so we can assume
  that $\beta(\pkt{x}_0,d) < 1$.

  Due to compactness of the Grassmannian $G(n,m)$ we can find an $m$-plane $H
  \in G(n,m)$ such that
  \begin{equation}
    \label{eq:points-dist}
    \sup_{y \in \Sigma \cap \Ball(\pkt{x}_0,d)} \dist(y, \pkt{x}_0 + H) = d \beta(\pkt{x}_0,d) \,.
  \end{equation}
  Set $h = d \beta(\pkt{x}_0,d) < d$. Without loss of generality we can assume that
  $\pkt{x}_0$ lies at the origin. Let us choose an orthonormal coordinate system
  $v_1$, \ldots, $v_n$ such that $H = \lin\{v_1, \ldots, v_m\}$. Because
  of~\eqref{eq:points-dist} in our coordinate system we have
  \begin{displaymath}
    T \subseteq [-d, d]^m \times [-h,h]^{n-m}\,.
  \end{displaymath}
  Of course $T$ lies in some $(m+1)$-dimensional section of the above
  product. Let
  \begin{align*}
    V &:= \aff \{ \pkt{x}_0, \ldots, \pkt{x}_{m+1} \} = \lin \{ \pkt{x}_1, \ldots, \pkt{x}_{m+1} \} \,,\\
    Q(a,b) &:= [-a,a]^m \times [-b,b]^{n-m} \,,\\
    Q &:= Q(d,h) \\
    \text{and} \quad
    P &:= V \cap Q \,.
  \end{align*}
  Note that all of the sets $V$, $Q$ and $P$ contain $T$. Choose another
  orthonormal basis $w_1$, \ldots, $w_n$ of $\R^n$, such that $V = \lin\{w_1,
  \ldots, w_{m+1}\}$. Set 
  \begin{displaymath}
    S := \{ x \in V^{\perp} : |\langle x, w_i \rangle| \le h \text{ for } i=1,\ldots,m+1 \} \,.
  \end{displaymath}
  Observe that $S$ is just the cube $[-h,h]^{n-m-1}$ placed in the orthogonal
  complement of $V$ and that $\diam S = 2 h \sqrt{n-m-1} =: 2Ah$, where $A =
  A(n,m) = \diam([0,1]^{n-m-1})$. In this setting we have
  \begin{equation}
    \label{eq:PxS}
    P \times S 
    = P + S 
    \subseteq Q(d + 2h A, h + 2hA) \,.
  \end{equation}
  Recall that $h = d \beta(\pkt{x}_0,d) < d$. We obtain the following estimate
  \begin{align*}
    \HM^n(T \times S) 
    &\le \HM^n(P \times S) 
    \le \HM^n(Q(d + 2h A, h + 2 h A))  \\
    &\le (2 d + 4 h A)^m (2 h + 4 h A)^{n-m}  \\
    &\le (2 d + 4 d \beta(\pkt{x}_0,d) A)^m
    (2 d \beta(\pkt{x}_0,d) + 4 d \beta(\pkt{x}_0,d) A)^{n-m} \\
    &\le (2 + 4 A)^n d^n \beta(\pkt{x}_0,d)^{n-m}  \,.
  \end{align*}
  On the other hand we have
  \begin{align*}
    \HM^n(T \times S) &= \HM^{m+1}(T) \HM^{n-m-1}(S) 
    = \HM^{m+1}(T) 2^{n-m-1} h^{n-m-1} \\
    &= 2^{n-m-1} \HM^{m+1}(T) d^{n-m-1} \beta(\pkt{x}_0,d)^{n-m-1} \,.
  \end{align*}
  Hence
  \begin{align*}
    2^{n-m-1} \HM^{m+1}(T) d^{n-m-1} \beta(\pkt{x}_0,d)^{n-m-1}
    &\le (2 + 4 A)^n d^n \beta(\pkt{x}_0,d)^{n-m}
    \quad \iff \\ \iff \quad
    \HM^{m+1}(T) &\le (2 + 4 A)^n 2^{-(n-m-1)} d^{m+1} \beta(\pkt{x}_0,d) \,.
  \end{align*}
  We may set $C = C(n,m) = (2 + 4 A)^n 2^{-(n-m-1)}$.
\end{proof}

Now we can prove the main result of this section.
\begin{proof}[Proof of Theorem~\ref{thm:fin-curv}]
  Let 
  \begin{displaymath}
    \mu = \underbrace{\HM^m \otimes \cdots \otimes \HM^m}_{m+1} \,.
  \end{displaymath}
  If $T = (x_0,x_1,\ldots,x_{m+1}) \in \Sigma^{m+2}$, we shall write $T =
  (x_0,\bar{x})$. Using Lemma~\ref{lem:dc-est} we obtain
  \begin{align*}
    \E_p(\Sigma) &= \int_{\Sigma^{m+2}} \DC(x_0,\bar{x})^p\ d\HM^m(x_0)\  d\mu(\bar{x}) \\
    &\le \int_{\Sigma} \int_{\Sigma^{m+1}} \left(
      \frac{\beta(x_0,\diam(x_0,\ldots,x_{m+1}))}{\diam(x_0,\ldots,x_{m+1})}
    \right)^p\ d\HM^m(x_0) d\mu(\bar{x}) \,.
  \end{align*}
  For $x_0 \in \Sigma$ and $k \in \N$ we define the sets
  \begin{displaymath}
    \Sigma_k(x_0) := \{ (x_1,\ldots,x_{m+1}) \in \Sigma^{m+1} : \diam(x_0,\ldots,x_{m+1}) \in (2^{-k-1},2^{-k}] \} \,.
  \end{displaymath}
  Choose $K_0 \in \Z$ such that $2^{-K_0} \ge 2 \diam(\Sigma)$. Now we can
  write
  \begin{displaymath}
    \E_p(\Sigma) \le
    \int_{\Sigma} \sum_{k=K_0}^{\infty}
    \int_{\Sigma_k(x_0)} \left(
      \frac{\beta(x_0,\diam(x_0,\bar{x}))}{\diam(x_0,\bar{x})}
    \right)^p\ d\HM^m(x_0) d\mu(\bar{x}) \,.
  \end{displaymath}
  Fix some small number $\varepsilon > 0$. Since $\Sigma$ is compact, we can
  find a radius $R > 0$ and a constant $C > 0$ such that for each $a \in \Sigma$
  there exists a function $f_a \in C^{1,\alpha}(T_a\Sigma \cap
  \Ball_{2R},T_a\Sigma^{\perp})$ such that
  \begin{displaymath}
    \Sigma \cap \Ball(a,R) \subseteq a + \graph(f_a) \,,
  \end{displaymath}
  \begin{displaymath}
    f_a(0) = 0 \,, \quad Df_a(0) = 0 
  \end{displaymath}
  \begin{displaymath}
    \text{and} \quad \forall x,y \in \Ball^m_{2R} \quad |f_a(x) - f_a(y)| \le \varepsilon |x-y| \,.
  \end{displaymath}
  For $r < R$ and $x_0 \in \Sigma$ we have the following estimate
  \begin{displaymath}
    \HM^m(\Sigma \cap \Ball(x_0,r))
    \le (1+\varepsilon)^m \HM^m((x_0 + T_{x_0}\Sigma) \cap \Ball(x_0,r))
    = (1+\varepsilon)^m \omega_m r^m \,.
  \end{displaymath}
  Choose $k_0 \in \Z$ such that $2^{-k_0} \le R$. Then for each $k \ge k_0$ we
  have
  \begin{displaymath}
    \HM^{m(m+1)}(\Sigma_k) \le (\omega_m (1 + \varepsilon)^m 2^{-k m})^{m+1} \,.
  \end{displaymath}
  Of course we have
  \begin{multline*}
    \int_{\Sigma}
    \sum_{k=K_0}^{k_0-1}
    \int_{\Sigma_k(x_0)}
    \left(
      \frac{\beta(x_0,\diam(x_0,\bar{x}))}{\diam(x_0,\bar{x})}
    \right)^p\ d\HM^m(x_0) d\mu(\bar{x}) \\
    \le \HM^m(\Sigma) (k_0 - K_0) \omega_m^{m+1} (1 + \varepsilon)^{m(m+1)} 2^{-K_0 m(m+1)} 2^{pk_0} < \infty \,,
  \end{multline*}
  so to show that $\E_p(\Sigma)$ is finite it suffices to estimate the sum from
  $k_0$ to $\infty$. Using Lemma~\ref{lem:beta-est} we can write
  \begin{multline*}
    \int_{\Sigma} \sum_{k=k_0}^{\infty}
    \int_{\Sigma_k(x_0)} \left(
      \frac{\beta(x_0,\diam(x_0,\bar{x}))}{\diam(x_0,\bar{x})}
    \right)^p\ d\HM^m(x_0) d\mu(\bar{x}) \\
    \le C \HM^m(\Sigma) \sum_{k=k_0}^{\infty}
      (\omega_m (1 + \varepsilon)^m )^{m+1}
      2^{-k m(m+1)} \Big(\frac{2^{-k\alpha}}{2^{-k-1}}\Big)^p \\
    = C'(m,p,\Sigma,\varepsilon) \sum_{k=k_0}^{\infty} 2^{-k m(m+1) - k p \alpha + kp} \,.
  \end{multline*}
  This sum is finite if and only if
  \begin{displaymath}
    -m(m+1) - p \alpha + p < 0
    \quad \iff \quad
    \alpha > 1 - \frac{m(m+1)}p \,.
  \end{displaymath}
\end{proof}

% Local Variables:
% coding: iso-8859-2
% eval: (ispell-change-dictionary "american")
% mode: latex
% mode: flyspell
% End:

% LocalWords:  submanifolds Grassmannian

\section{Construction of a $C^{1,1-2/p}$ curve with infinite $\M_p$ energy}
\label{sec:curv-ex}

In this section we shall prove the following theorem.
\begin{thm}
  \label{thm:curvEx}
  Let $p > 2$ and set $\alpha = 1 - \frac 2p$. There exists a function $F \in
  C^{1,\alpha}(\R)$ such that $\M_p(\graph(F)) = \infty$.
\end{thm}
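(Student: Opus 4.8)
The plan is to build $F$ by packing rescaled copies of a single smooth bump at every dyadic scale, tuning the amplitudes so that $F$ lies exactly on the borderline $\alpha_0 = 1-\frac2p$ and so that the Menger-curvature integrand is comparable to its natural upper bound $|x-y|^{\alpha-1}$ on a set of triples big enough to force the (logarithmically critical) integral to diverge. Concretely, I would fix a profile $\psi \in C^\infty_c((0,1))$ whose second derivative $\psi''$ has a definite sign on some subinterval (so that the rescaled bumps genuinely curve). For $j \ge 1$ set $\ell_j = 2^{-j}$ and $a_j = \ell_j^{1+\alpha} = 2^{-j(1+\alpha)}$; a bump of width $\ell_j$ and amplitude $a_j$ then has curvature of order $a_j/\ell_j^2 = \ell_j^{\alpha-1}$. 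Partition $[j,j+1]$ into $N_j = 2^j$ intervals of length $\ell_j$ and on the $i$-th of these place a translated, rescaled copy $\phi_{j,i}(t) = a_j\,\psi\bigl((t-t_{j,i})/\ell_j\bigr)$, and define $F = \sum_{j,i}\phi_{j,i}$. The supports are pairwise disjoint, so $F$ is well defined and smooth away from countably many points.

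For regularity I would check that $F \in C^{1,\alpha}(\R)$. A scaling computation gives, for each single bump, a Hölder seminorm $[\phi_{j,i}']_{C^\alpha} = a_j\ell_j^{-1-\alpha}[\psi']_{C^\alpha} = [\psi']_{C^\alpha}$, uniform in $j$ and $i$. Since $\psi \in C^\infty_c((0,1))$, $F'$ vanishes at every bump endpoint and on all the gaps, so for two points $t,s$ in distinct bumps one has $|F'(t)-F'(s)| \le |F'(t)| + |F'(s)|$, and each term is bounded by $[\psi']_{C^\alpha}$ times the $\alpha$-power of the distance to the nearest intervening endpoint, hence by $[\psi']_{C^\alpha}|t-s|^\alpha$. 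This yields the global bound $[F']_{C^\alpha(\R)} \le 2[\psi']_{C^\alpha} < \infty$, and $\|F'\|_\infty \le \sup_j \ell_j^\alpha\|\psi'\|_\infty < \infty$, so $F \in C^{1,\alpha}(\R)$ with $\alpha = 1-\frac2p \in (0,1)$ (here $p>2$ is used).

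The heart of the argument is the energy lower bound. I would work with the graph parametrization $x \mapsto (x,F(x))$; since $|\Gamma'| = \sqrt{1+F'^2}$ is bounded above and below, replacing arc length by $x$ alters the integrand only by bounded factors and does not affect divergence (this also sidesteps the fact that $\M_p$ was introduced for closed curves while $\graph(F)$ is noncompact). Fix a scale-$j$ bump; all mutual distances of points on it are $\lesssim \ell_j$, so the product of the three side lengths in Definition~\ref{menger} is $\lesssim \ell_j^3$. Choosing the three abscissae near the left, middle and right thirds of the bump, the triangle they span has base $\sim \ell_j$ and height $\sim a_j$, the latter being the deviation of the genuinely curved arc from its chord; hence $\HM^2(\conv(\cdots)) \gtrsim \ell_j a_j$ and $R^{-1} = 4\HM^2(\conv(\cdots))/\prod(\text{sides}) \gtrsim a_j/\ell_j^2 = \ell_j^{\alpha-1}$. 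This lower bound survives under $\sim\ell_j$-perturbations of each of the three points, so on a set of triples of $\mu^{\otimes3}$-measure $\gtrsim \ell_j^3$ per bump one has $R^{-p} \gtrsim \ell_j^{(\alpha-1)p} = \ell_j^{-2}$. Summing over all $N_j$ bumps and all scales gives
\[
  \M_p(\graph(F)) \gtrsim \sum_{j\ge1} N_j\,\ell_j^{3}\,\ell_j^{-2}
  = \sum_{j\ge1} 2^{j}\,2^{-j} = \sum_{j\ge1} 1 = \infty \,.
\]

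The main obstacle is precisely this geometric lower bound on the Menger curvature: all earlier lemmas produce only upper bounds, whereas here I must certify that the bumps curve by a definite amount, i.e. bound the circumscribed radius from \emph{below}, which is why $\psi''$ is required to have a fixed sign on a subinterval, and then propagate this estimate uniformly to a full-measure neighborhood of the chosen configuration so as to obtain the $\gtrsim \ell_j^3$ measure bound. The regularity and the scale bookkeeping are routine by comparison; the only mild care needed elsewhere is the remark above reconciling the graph with the closed-curve definition of $\M_p$.
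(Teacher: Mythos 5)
There is a genuine gap, and it is structural rather than technical. Because you place the scale-$j$ bumps on the interval $[j,j+1]$, all scales live on pairwise disjoint unit intervals, and your divergence $\sum_j N_j\,\ell_j^{3}\,\ell_j^{-2}=\sum_j 1$ comes entirely from the unboundedness of the domain, not from the H\"older exponent. Indeed, by exactly the same translation argument, $F(x)=\sin x$ --- a $C^\infty$ function --- already has $\M_p(\graph(F))=\infty$ when the graph is taken over all of $\R$: each period contributes a fixed positive amount. So your construction does not isolate the critical exponent $\alpha_0=1-\tfrac2p$ at all; it would ``work'' verbatim for any $\alpha$, and even for smooth $F$. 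The content of Theorem~\ref{thm:curvEx}, as the paper proves it and as it must be read to show sharpness of Theorem~\ref{converse} (which concerns closed, finite-length curves), is that the energy already diverges on a \emph{compact} piece of the graph: the paper shows $\int_0^1\int_0^1\int_0^1 R^{-p}\,dx\,dy\,dz=\infty$.

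Worse, your approach cannot be repaired while keeping the bumps disjointly supported. At the critical exponent each bump of width $\ell$ contributes $\approx \ell^3\cdot\ell^{(\alpha-1)p}=\ell$ to the energy, i.e.\ an amount proportional to its own length; hence any family of disjointly supported bumps packed into $[0,1]$ contributes at most a constant times the total length, which is finite. To force divergence on a compact interval you must \emph{superimpose} the scales, which is precisely what the paper's integrated van der Waerden saw $F=\sum_k\int_0^x f_0(N^k t)N^{-\alpha k}\,dt$ does: every subinterval of $[0,1]$ carries oscillation at every scale, so each scale $k$ contributes $\gtrsim N^{-k(2-(1-\alpha)p)}=1$ from triples inside $[0,1]$. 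The price of superposition --- and the real work of the proof, which your proposal has no analogue of --- is showing that the secant-slope discrepancies $\delta_n(x,z,y)$ coming from different scales do not cancel: the coarse scales $n\le k$ add up coherently to $\gtrsim N^{-k\alpha}$ (estimate \eqref{nlek-final}), while the fine scales $n>k$ contribute at most $\tfrac1{32}N^{-k\alpha}$ (estimate \eqref{ngk-final}). Your single-bump curvature lower bound is fine as far as it goes, but without this multi-scale non-cancellation step the compact-domain divergence is out of reach.
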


The construction of our function is based on the van~der~Waerden saw. Let
\begin{displaymath}
  \tilde f(x) = \left\{
    \begin{array}{cc}
      2x   & \textrm{for} \ x\in\left[ 0,\frac{1}{2} \right] \\
      2-2x & \textrm{for} \  x \in \left( \frac{1}{2},1 \right]
    \end{array}
  \right.
  \quad \text{and} \quad
  f_0(x) = \tilde f(x-[x]) \,.
\end{displaymath}
For $\alpha \in (0,1) $ we define a sequence of functions 
\begin{displaymath}
  f_k(x) = \frac{f_0(N^k x)}{N^{\alpha k}} \,,
\end{displaymath}
where $N \in \N$ is a fixed number whose value be determined later on. We set
\begin{equation}
  \label{def:f}
  f(x) = \sum_{k=0}^\infty f_k(x) \,.
\end{equation}
Finally we define
\begin{equation}
  \label{def:F}
  F(x) = \int_0^x f(t)\ dt = \sum_{k=0}^{\infty} \int_0^x f_k(t)\ dt \,.
\end{equation}

First we show that $F$ is  $C^{1,\alpha}$.
\begin{lem}
  \label{lem:holder}
  The function $F : \R \to \R$ defined by \eqref{def:F} is of class
  $C^{1,\alpha}$. Moreover we have
  \begin{displaymath}
    \forall x,y \in \R
    \quad
    |F'(x) - F'(y)| \le \left( \frac{2N}{N^{1-\alpha} - 1} + \frac{2N^{\alpha}}{N^{\alpha}-1} \right) |x-y|^{\alpha} \,.
  \end{displaymath}
\end{lem}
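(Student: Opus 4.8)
The plan is to reduce the whole statement to a single Hölder estimate for $f=\sum_{k\ge 0} f_k$ and then prove that estimate by a frequency-splitting argument. First I would record the elementary properties of the building blocks. Since $f_0$ is the $1$-periodic saw with $\|f_0\|_\infty=1$ and Lipschitz constant $2$, the rescaled copies satisfy
\[
  \|f_k\|_\infty = N^{-\alpha k} \qquad\text{and}\qquad |f_k(x)-f_k(y)| \le 2N^{(1-\alpha)k}|x-y|,
\]
so each $f_k$ is $(1/N^k)$-periodic with Lipschitz constant $2N^{(1-\alpha)k}$. Because $\alpha>0$, the bound $\|f_k\|_\infty=N^{-\alpha k}$ gives a convergent majorant $\sum_k N^{-\alpha k}$, so by the Weierstrass $M$-test the series $\sum_k f_k$ converges uniformly to a continuous function $f$. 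Uniform convergence justifies interchanging the sum and the integral in \eqref{def:F}, and the fundamental theorem of calculus then yields $F\in C^1$ with $F'=f$. Thus $F\in C^{1,\alpha}$ is \emph{equivalent} to $f$ being $\alpha$-Hölder, and this is the only substantive point.

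Second, to estimate $|f(x)-f(y)|$ I would split the series at a frequency $M$ tuned to $\delta:=|x-y|$ (a dyadic decomposition in base $N$). The case $\delta\ge 1$ is immediate from boundedness: $|f(x)-f(y)|\le 2\|f\|_\infty\le 2\sum_{k\ge0}N^{-\alpha k}=\frac{2N^\alpha}{N^\alpha-1}\le\frac{2N^\alpha}{N^\alpha-1}\,\delta^\alpha$. For $0<\delta<1$ pick the unique integer $M\ge 0$ with $N^{-(M+1)}\le\delta<N^{-M}$ and write
\[
  |f(x)-f(y)| \le \sum_{k=0}^{M}|f_k(x)-f_k(y)| + \sum_{k=M+1}^{\infty}|f_k(x)-f_k(y)|.
\]
On the low-frequency block I apply the Lipschitz bound $|f_k(x)-f_k(y)|\le 2N^{(1-\alpha)k}\delta$, and on the high-frequency tail the crude bound $|f_k(x)-f_k(y)|\le 2\|f_k\|_\infty=2N^{-\alpha k}$.

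Third, I sum the two geometric series. Since $\alpha<1$ the ratio $N^{1-\alpha}>1$, so the low-frequency sum is dominated by its top term:
\[
  \sum_{k=0}^{M}2N^{(1-\alpha)k}\delta \le \frac{2\delta\,N^{(1-\alpha)(M+1)}}{N^{1-\alpha}-1},
\]
and inserting $N^{-M}>\delta$, i.e. $N^{(1-\alpha)M}<\delta^{-(1-\alpha)}$, turns the right-hand side into $\frac{2N^{1-\alpha}}{N^{1-\alpha}-1}\delta^\alpha\le\frac{2N}{N^{1-\alpha}-1}\delta^\alpha$. The ratio $N^{-\alpha}<1$ makes the tail converge, and inserting $N^{-(M+1)}\le\delta$, i.e. $N^{-\alpha M}\le N^\alpha\delta^\alpha$, gives
\[
  \sum_{k=M+1}^{\infty}2N^{-\alpha k} = \frac{2N^{-\alpha M}}{N^\alpha-1} \le \frac{2N^\alpha}{N^\alpha-1}\,\delta^\alpha.
\]
Adding the two contributions produces exactly the claimed constant $\frac{2N}{N^{1-\alpha}-1}+\frac{2N^\alpha}{N^\alpha-1}$.

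The routine parts — the bounds on $\|f_k\|_\infty$ and on the Lipschitz constants, the uniform convergence, and the identity $F'=f$ — carry no difficulty. I expect the only delicate point to be the balancing step: choosing the cutoff $M$ so that $N^{-M}\approx\delta$ and tracking the endpoints $N^{-(M+1)}\le\delta<N^{-M}$ carefully, so that one geometric sum with ratio $N^{1-\alpha}>1$ and one with ratio $N^{-\alpha}<1$ combine to yield precisely the factor $\delta^\alpha$ with the stated closed-form constant. This is essentially bookkeeping of exponents, but it is where the proof must be precise.
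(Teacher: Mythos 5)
Your proposal is correct and follows essentially the same strategy as the paper's proof: split $\sum_k f_k$ at the frequency scale determined by $|x-y|$, use the Lipschitz bound $2N^{(1-\alpha)k}|x-y|$ on the low-frequency block and the sup-norm bound $2N^{-\alpha k}$ on the tail, and sum the two geometric series to get the identical constant. The paper packages these same two estimates via base-$N$ digit expansions of $x$ and $h$, but the term-by-term bounds and the final bookkeeping coincide with yours.
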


\begin{proof}
  Since $F'(x) = f(x)$, it suffices to indicate that $f : \R \to \R$ is
  H{\"older} continuous. Note that $f$ is periodic with period $1$, so it is
  enough to show that $|f(x) - f(y)| \lesssim |x-y|^{\alpha}$ only for $x \in
  [0,1]$ and $y \in [0,1]$.

  Fix two numbers $x \in [0,1]$ and $y \in [0,1]$ such that $x < y$. Let $h =
  y-x$ and let $l \in \N$ be such that
  \begin{displaymath}
    \frac 1{N^l} \le h \le \frac 1{N^{l-1}} \,.
  \end{displaymath}
  We can express $x$ and $h$ as infinite sums
  \begin{displaymath}
    x = \sum_{j=0}^{\infty} \frac{x_j}{N^j} 
    \qquad \text{and} \qquad
    y - x = h = \sum_{j=l}^{\infty} \frac{h_j}{N^j} \,,
  \end{displaymath}
  where $x_j, h_j \in \{ 0, 1, \ldots, N-1 \}$ for each $j \in \N$. Now, we
  calculate
  
  \begin{align*}
    |f(y) - f(x)| &= |f(x + h) - f(x)| =
    \left|
      \sum_{k=0}^{\infty} \frac 1{N^{\alpha k}}
      \left(
        f_0\Big( \sum_{j=0}^{\infty} \tfrac{N^k x_j}{N^j} \Big)
        - f_0\Big( \sum_{j=0}^{\infty} \tfrac{N^k x_j + N^k h_j}{N^j} \Big)
      \right)
    \right| \\
    &\le 
    \sum_{k=0}^{\infty} \frac 1{N^{\alpha k}}
    \left|
      f_0\Big( \sum_{j=\max(k+1,l)}^{\infty} \tfrac{N^k x_j}{N^j} \Big)
      - f_0\Big( \sum_{j=\max(k+1,l)}^{\infty} \tfrac{N^k x_j + N^k h_j}{N^j} \Big)
    \right| \,.
  \end{align*}
  Hence, using the fact that $f_0$ is Lipschitz continuous with Lipschitz
  constant $2$ we obtain
  \begin{align*}
    |f(y) - f(x)| &\le 
    \sum_{k=0}^{l-1} \frac 2{N^{\alpha k}} \sum_{j=l}^{\infty} \frac{N^k h_j}{N^j} +
    \sum_{k=l}^{\infty} \frac 2{N^{\alpha k}} \sum_{j=k+1}^{\infty} \frac{N^k h_j}{N^j} \\
    &\le 2 |y-x| \sum_{k=0}^{l-1} N^{k(1-\alpha)} +
    2 \sum_{k=l}^{\infty} \frac{N^k}{N^{\alpha k}} \sum_{j=k+1}^{\infty} \frac{N-1}{N^j} \\
    &= 2 |y-x| \frac{N^{l(1-\alpha)} - 1}{N^{1-\alpha} - 1} +
    \sum_{k=l}^{\infty} \frac 2{N^{\alpha k}}
    \le \frac 2{N^{\alpha l}} \frac{N}{N^{1-\alpha} - 1} + \frac 2{N^{\alpha l}} \frac{N^{\alpha}}{N^{\alpha}-1} \\
    &\le \left( \frac{2N}{N^{1-\alpha} - 1} + \frac{2N^{\alpha}}{N^{\alpha}-1} \right) |y-x|^{\alpha} \,.
  \end{align*}
\end{proof}

Recall that $\alpha = 1 - \frac 2p$. Now we shall prove that the $p$-integral
curvature $\M_p(\graph(F))$ is infinite.
\begin{lem}
  \label{lem:infinite}
  The function $F : \R \to \R$ defined by \eqref{def:F} satisfies
  $\M_p(\graph(F)) = \infty$.
\end{lem}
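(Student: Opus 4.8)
The plan is to bound $\M_p(\graph(F))$ from below by the triple integral restricted to $x_0,x_1,x_2 \in [0,1]$. Since the Menger-curvature integrand $R^{-p}$ is non-negative, throwing away the rest of the domain only decreases the value, so it suffices to prove that this restricted integral diverges; the fact that the graph lies over $\R$ rather than over a circle is therefore irrelevant. For a fixed scale $k$ the sawtooth $f_k$ has period $N^{-k}$, hence there are $\sim N^k$ rising edges inside $[0,1]$. Around the midpoint $c$ of each rising edge (that is, $N^k c \equiv \tfrac14 \pmod 1$) I would build a \emph{good box} of triples $(x_0,x_1,x_2)$ with $x_1$ near $c$, $x_0$ near $c-\tfrac14 N^{-k}$ and $x_2$ near $c+\tfrac14 N^{-k}$, each coordinate ranging over an interval of length $\sim N^{-k}$ (with a small absolute constant). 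Writing $y_i=F(x_i)$, $a=x_1-x_0$, $b=x_2-x_1$, twice the area of the triangle is $\Delta:=|a(y_2-y_1)-b(y_1-y_0)|$; since $F'=f$ is bounded, the three side lengths are comparable to $|x_i-x_j|\sim N^{-k}$, so by Definition~\ref{menger} one has $R^{-1}\asymp \Delta/N^{-3k}$ on the box.

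The heart of the matter is a two-sided control of $\Delta$. First, at the symmetric centre $x_0=c-\delta$, $x_1=c$, $x_2=c+\delta$ with $\delta=\tfrac14 N^{-k}$ one computes $\Delta=\delta\int_0^\delta\bigl(f(c+s)-f(c-s)\bigr)\,ds$, and I would split this over the frequencies $f_j$. The term $j=k$ is linear along the rising edge and contributes exactly $2N^{k(1-\alpha)}\delta^2$; the low frequencies $j<k$ are controlled by their Lipschitz constants $2N^{j(1-\alpha)}$ and sum to a geometric series whose total is at most $(N^{1-\alpha}-1)^{-1}$ times the main term, while the high frequencies $j>k$ contribute only boundary errors of size $N^{-j(1+\alpha)}$, again summing to a small multiple of the main term. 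Choosing $N$ large (this is where the free parameter $N$ gets fixed) makes the $j=k$ term dominate, so $\Delta\gtrsim N^{-k(2+\alpha)}$ at the centre. Second, the gradient of $(x_0,x_1,x_2)\mapsto\Delta$ is $\lesssim N^{-k(1+\alpha)}$ throughout the box: differentiating the formula for $\Delta$ expresses each partial derivative as an interval length times a difference of values of $f$ over that interval, and the H\"older estimate of Lemma~\ref{lem:holder} bounds such a difference by $C_0 N^{-k\alpha}$, hence each partial by $\lesssim N^{-k}\cdot N^{-k\alpha}$. Shrinking the box to side $c N^{-k}$ with $c$ small then keeps $\Delta\ge\tfrac12(\text{central value})\gtrsim N^{-k(2+\alpha)}$ on the \emph{whole} box.

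Combining the two bounds gives $R^{-1}\gtrsim N^{k(1-\alpha)}$ uniformly on each good box, so each box contributes $\int R^{-p}\gtrsim N^{-3k}\,\bigl(N^{k(1-\alpha)}\bigr)^p = N^{-3k}N^{2k}=N^{-k}$, where I used $p(1-\alpha)=2$, which holds precisely because $\alpha=1-\tfrac2p$. Summing over the $\sim N^k$ rising edges at scale $k$ (whose $x_1$-intervals lie in distinct periods, so the boxes are disjoint) yields a contribution bounded below by a positive constant independent of $k$. Since boxes at different scales are also disjoint — they sit at separation $|x_0-x_1|\sim N^{-k}$, differing by a factor $N$ — the full integral is at least $\sum_k(\text{const})=\infty$, giving $\M_p(\graph(F))=\infty$.

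The step I expect to be the main obstacle is the \emph{uniform} lower bound on $\Delta$ over an entire box of side $\sim N^{-k}$, rather than at a single configuration: a pointwise estimate gives a set of measure zero and is useless. The resolution is exactly the pairing of the explicit saw computation (which furnishes the central value together with the dominance of the $k$-th frequency for large $N$) with the H\"older gradient bound of Lemma~\ref{lem:holder} (which guarantees that $\Delta$ cannot drop by more than a fixed fraction across the box). Everything else — comparability of chord lengths to $|x_i-x_j|$, disjointness of the boxes, and the final summation — is routine bookkeeping once this uniform bound is in hand.
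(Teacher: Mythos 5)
Your argument is correct, and its skeleton coincides with the paper's: the same restriction of the triple integral to $[0,1]^3$, the same three marked locations $0$, $\tfrac14 N^{-k}$, $\tfrac12 N^{-k}$ inside each period of $f_k$ (the paper's intervals $X_{k,m}$, $Z_{k,m}$, $Y_{k,m}$), the same frequency split at $j=k$ with the $k$-th saw dominating a geometric series of lower frequencies plus a small tail of higher ones, and the same final count $N^k\cdot N^{-3k}\cdot N^{k(1-\alpha)p}=N^{k(p(1-\alpha)-2)}$, which is bounded below by a constant precisely when $\alpha\le 1-\tfrac2p$. Where you genuinely diverge is on the step you yourself single out as the crux: uniformity of the lower bound over a box of positive measure. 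The paper gets uniformity directly, with no perturbation argument: for \emph{every} triple $(x,y,z)\in X_{k,m}\times Y_{k,m}\times Z_{k,m}$ and every $n\le k$ the saw $f_n$ is affine on $[x,y]$, so $\delta_n(x,z,y)$ is computed exactly, $|\delta_n|=2N^{n(1-\alpha)}(y-z)$, and the pointwise bound \eqref{est:menger} holds on the whole box at once. You instead evaluate the (equivalent) quantity $\Delta=(y-x)(z-x)\bigl|\tfrac{F(y)-F(x)}{y-x}-\tfrac{F(z)-F(x)}{z-x}\bigr|$ only at the symmetric centre of the box and propagate by the gradient bound on the signed area, $\lesssim N^{-k(1+\alpha)}$, coming from the H\"older continuity of $f=F'$ supplied by Lemma~\ref{lem:holder}. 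This costs you an extra parameter: the H\"older constant of $f$ grows with $N$, so the box side $cN^{-k}$ must be shrunk \emph{after} $N$ is fixed (your write-up orders these choices correctly). What it buys is robustness: you never need the low-frequency saws to be affine across the whole box (only Lipschitz), so your argument would survive perturbations of the van der Waerden construction, whereas the paper's shorter computation is tied to the exact piecewise-linear structure. Your explicit verification that the boxes are pairwise disjoint across scales (via the separation $|x_1-x_0|\sim\tfrac14 N^{-k}$) makes rigorous a point the paper leaves implicit.
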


\begin{proof}
  The graph of $F$ is not a closed curve, thus we express the $\M_p$ energy in the following way
  \[
  \M_p(\graph(F)) := \int_0^L \int_0^L \int_0^L
  R^{-p}(\Gamma(x), \Gamma(y), \Gamma(z))\,dx\,dy\,dz \,,
  \]
  where $\Gamma$ is an arc-length parametrization of $\graph(F)$ and $L = \HM^1(\graph(F))$.
  It is easy to notice that
  \[
  \M_p(\graph(F)) > \int_0^1 \int_0^1 \int_0^1
  R^{-p}((x,F(x)), (y,F(y)), (z, F(z)))\,dx\,dy\,dz \, 
  \]
  
  We denote by $\vec{t}$ a point of the graph given by argument $t$;
  i.e. $\vec{t}=(t,F(t))$. Since $F$ is a~Lipschitz function, for any two points
  of the graph we have
  \begin{displaymath}
    |t-s| \le \|\vec{t}-\vec{s} \| 
    = \sqrt{(t-s)^2 + \big(F(t)-F(s)\big)^2} 
    \le C(\alpha)|t-s| \,.
  \end{displaymath}  
  Let us start with the following estimation of the Menger curvature of three
  points of the graph. If $0\le x \le z \le y \le 1$ we have
  \begin{displaymath}
    \frac{1}{R(\vec{x},\vec{y},\vec{z})} 
    = \frac{4 \HM^2 \big(\conv(\vec{x},\vec{y},\vec{z})\big)}
    {\|\vec{x}-\vec{y} \|\|\vec{y}-\vec{z}\| \|\vec{x}-\vec{z}\| }
    \ge \frac{2h}{\|\vec{x}-\vec{y}\|^2} 
    = \frac{\sin\varangle(\vec{x}-\vec{y}, \vec{x}-\vec{z})}{\|\vec{x}-\vec{y} \|^2} \| \vec{x}-\vec{z}\| \,,
  \end{displaymath}
  where $h$ denotes the height of the triangle $\conv(\vec{x},\vec{y},\vec{z})$
  which is perpendicular to $\vec{x}-\vec{y}$. In order to find a lower bound
  for the above expression we estimate tangent of the angle between
  $\vec{x}-\vec{y}$ and $\vec{x}-\vec{z}$
  \begin{displaymath}
    |\tan(\varangle(\vec{x}-\vec{y}, \vec{x}-\vec{z}))|
    = \frac{\left|\frac{F(y)-F(x)}{y-x}-\frac{F(z)-F(x)}{z-x}\right|}{1+\frac{F(y)-F(x)}{y-x}\frac{F(z) - F(x)}{z-x}}
    \ge C(\alpha) \left| \frac{F(y)-F(x)}{y-x}-\frac{F(z)-F(x)}{z-x} \right| \,.
  \end{displaymath}
  Therefore if $\varangle(\vec{x}-\vec{y},\vec{x}-\vec{z}) \le \frac{\pi}{3}$ we
  have
  \begin{displaymath}
    \frac{1}{R(\vec{x},\vec{y},\vec{z})} 
    \ge \frac{C(\alpha)\left| \frac{F(y)-F(x)}{y-x}-\frac{F(z)-F(x)}{z-x} \right|(z-x)}{(y-x)^2} \,.
  \end{displaymath}
  We will prove that the energy is infinite even when we consider much smaller
  domain of integration. For $k \in \N$ and $m \in \{ 0,1, \ldots, N-1 \}$ we
  define the following intervals
  \begin{align*}
    X_{k,m} &= \left[\frac{m}{N^k},\frac{m}{N^k}+\frac{1}{16}\frac{1}{N^k} \right] \,, \\
    Y_{k,m} &= \left[\frac{m+1/2}{N^k}-\frac{1}{16}\frac{1}{N^k},\frac{m+1/2}{N^k} \right] \,, \\
    Z_{k,m} &= \left[\frac{m}{N^k}+ \frac{1}{4}\frac{1}{N^k},\frac{m}{N^k}+\frac{1}{4N^k}+\frac{1}{16N^k} \right] \,.
  \end{align*}
  Of course, for sufficiently large $N$, $\varangle(\vec{x}-\vec{y},\vec{x}-\vec{z}) \le \frac{\pi}{3}$ and we have
  \begin{displaymath}
    \M_p(\graph(F)) 
    \ge \sum_{k=1}^\infty \sum_{m=0}^{N^k} 
    \int_{X_{k,m}} \int_{Y_{k,m}} \int_{Z_{k,m}}
    R^{-p}(\vec{x},\vec{y},\vec{z})\ dx\ dy\ dz \,,
  \end{displaymath}
  For our purposes it is enough to carry the calculation only for $x \in
  X_{k,m}$, $y \in Y_{k,m}$ and $z \in Z_{k,m}$. Recall that we have
  \begin{displaymath}
    F(x) =  \sum_{k=0}^\infty \int_0^x f_k(t)\ dt \,.
  \end{displaymath}
  For convenience we denote $F_k(x) = \int_0^x f_k(t)$ and we notice that
  \begin{displaymath}
    \left| \frac{F(y)-F(x)}{y-x}-\frac{F(z) -F(x)}{z-x}\right|
    = \left| \sum_{k=0}^\infty \frac{F_k(y)-F_k(x)}{y-x}-\frac{F_k(z) -F_k(x)}{z-x} \right| \,.
  \end{displaymath}
  We need to divide the above sum into two parts which behave differently.

  \begin{itemize}
  \item For $n\le k$ there are two possibilities. Either
    \begin{align*}
      \frac{F_n(y)-F_n(x)}{y-x}
      &= \frac{1}{y-x}\int_{x}^{y} \left( \frac{1}{N^n} \right)^\alpha f(N^nt)\ dt \\
      &= \frac{1}{y-x}\int_{x}^{y} \left(\frac{1}{N^n}\right)^\alpha 2N^nt\ dt
      = 2 \left(\frac{1}{N^n}\right)^{\alpha-1} (x+y)
    \end{align*}
    and
    \begin{displaymath}
      \frac{F_n(z)-F_n(x)}{z-x}= 2 \left(\frac{1}{N^n}\right)^{\alpha-1} (x+z)
    \end{displaymath}
    or
    \begin{align*}
      \frac{F_n(y)-F_n(x)}{y-x}
      &= \frac{1}{y-x} \int_x^y\left(\frac{1}{N^n}\right)^\alpha2(1-N^n t)\ dt \\
      &= 2\left(\frac{1}{N^n}\right)^\alpha - 2\left(\frac{1}{N^n}\right)^{\alpha-1}(x+y)
    \end{align*}
    and
    \begin{displaymath}
      \frac{F_n(z)-F_n(x)}{z-x} 
      = 2\left(\frac{1}{N^n}\right)^\alpha - 2\left(\frac{1}{N^n}\right)^{\alpha-1}(x+z) \,.
    \end{displaymath}
    Denoting by
    \[
    \delta_n(x,z,y) :=  \frac{F_n(y)-F_n(x)}{y-x} - \frac{F_n(z)-F_n(x)}{z-x},
    \] 
    in both cases we have
    \begin{displaymath}
      |\delta_n(x,z,y)|  
      = 2 (N^n)^{1-\alpha}(y-z) \,.
    \end{displaymath}
    Now, taking $N$ sufficiently large:
    \begin{equation}
      \label{nlek-final}
      \left|\sum_{n=1}^k \delta_n(x,z,y) \right| 
      \ge \frac{1}{2} (N^k)^{1-\alpha}(y-z)
      \ge \frac{1}{2}(N^k)^{1-\alpha}\frac{3}{16}\frac{1}{N^k}
      = \frac{3}{32}N^{-k\alpha} \,.
    \end{equation}

  \item For $n>k$ we notice that
    \begin{align*}
      F_n(y) - F_n(x) 
      &= \int_{x}^y f_n(t)\ dt
      \le \left[(y-x)2N^n\right]\frac{1}{4N^n}\left(\frac{1}{N^n}\right)^{\alpha}
      + \frac 1{2N^n} \left(\frac{1}{N^n} \right)^\alpha  \\
      &\le (y-x) \frac 12 \left(\frac{1}{N^n} \right)^\alpha 
      + \frac 12 \left(\frac{1}{N^n} \right)^{\alpha+1}
    \end{align*}
    and
    \begin{displaymath}
      F_n(y) - F_n(x)
      \ge \frac 12 (y-x) \left(\frac{1}{N^n}\right)^\alpha 
      - \frac 14 \left(\frac{1}{N^n}\right)^{\alpha+1} \,.
    \end{displaymath}
    As $y-x \ge \frac 38 \frac 1{N^k}$ we have
    \begin{displaymath}
      \frac 12 \left(\frac{1}{N^n}\right)^{\alpha} 
      - \frac 23 N^k \left(\frac{1}{N^n}\right)^{\alpha+1}
      \le \frac{F_n(y) - F_n(x)}{y-x}
      \le \frac 12 \left(\frac{1}{N^n} \right)^\alpha
      + \frac 43 \left(\frac{1}{N^n}\right)^{\alpha + 1} N^k \,.
    \end{displaymath}
    Analogously
    \begin{displaymath}
      \frac 12 \left(\frac{1}{N^n}\right)^{\alpha} 
      - \frac 43 N^k \left(\frac{1}{N^n}\right)^{\alpha+1}
      \le \frac{F_n(z) - F_n(x)}{z-x}
      \le \frac 12 \left(\frac 1{N^n}\right)^{\alpha} 
      + \frac 83 N^k \left(\frac{1}{N^n}\right)^{\alpha+1} \,.
    \end{displaymath}
    Thus there exists a constant (which does not depend on $k, n$ and $N$), such
    that
    \begin{displaymath}
      |\delta_n(x,z,y)| < CN^k \left( \frac 1{N^n} \right)^{\alpha+1} \,.
    \end{displaymath}
    We can choose $N$ large enough to estimate the geometric series from above by its
    first term
    \begin{displaymath}
      \left|\sum_{n=k+1}^\infty \delta_n(x,z,y)\right|
      < \sum_{n=k+1}^\infty CN^n \left( \frac 1{N^{n}} \right)^{\alpha+1}
      < 2CN^k\left( \frac 1{N^{k+1}} \right)^{\alpha+1} \,,
    \end{displaymath}
    enlarging $N$ if necessary we get
    \begin{equation}
      \label{ngk-final}
      \left|\sum_{n=k+1}^\infty \delta_n(x,z,y)\right|
      < \frac 1{32} \left(\frac 1{N^{k}}\right)^{\alpha} \,.
    \end{equation}
  \end{itemize}
  Putting both estimates \eqref{nlek-final} and \eqref{ngk-final} together
  we~obtain
  \begin{align*}
    \left| \sum_{n=0}^\infty \delta_n(x,z,y) \right| 
    &\ge \left| \sum_{n=0}^k \delta_n(x,z,y) \right| 
    - \left| \sum_{n=k+1}^\infty \delta_n(x,z,y) \right| \\
    &\ge \frac 3{32} \left( \frac 1{N^{k}} \right)^{\alpha} 
    - \frac 1{32} \left( \frac 1{N^{k}} \right)^{\alpha}
    \ge \frac 1{16} \left( \frac 1{N^{k}} \right)^{\alpha} \,.
  \end{align*}
  Thus for each $k,m \in \N$ and $x\in X_{k,m}$, $y\in X_{k,m}$, $z \in Z_{k,m}$
  we have
  \begin{equation}
    \label{est:menger}
    \left|\frac{F(y)-F(x)}{y-x} - \frac{F(x)-F(z)}{x-z}\right|
    \ge \frac{1}{16}\left(\frac{1}{N^{k}}\right)^{\alpha} \,.
  \end{equation} 

  Using \eqref{nlek-final} and \eqref{ngk-final} we can estimate the integral
  \begin{align*}
    \M_p(\graph(F)) 
    &\ge \sum_{k=1}^\infty \sum_{m=0}^{N^k-1} \int_{X_{k,m}} \int_{Y_{k,m}} \int_{Z{k,m}}
    R^{-p}(\vec{x},\vec{y},\vec{z})\ dx\ dy\ dz \\
    &\ge \sum_{m=0}^{N^k-1} \int_{X_{k,m}} \int_{Y_{k,m}} \int_{Z{k,m}}
    C(\alpha) \left(\frac{\frac{1}{16}N^{-k\alpha}(z-x)}{(y-x)^2}\right) \\
    &\ge C(\alpha) \sum_{k=1}^\infty \sum_{m=0}^{N^k-1}
    \left(
      \frac{\frac{1}{16}N^{-k\alpha}\frac{3}{16}N^{-k}}{\frac{1}{4}N^{-2k}}
    \right)^p |X_{k,m}|\cdot|Y_{k,m}| \cdot |Z_{k,m}| \\
    & \ge C(\alpha) \sum_{k=1}^\infty \sum_{m=0}^{N^k-1} 
    (N^k)^{(1-\alpha)p -3} 
    \ge C(\alpha) \sum_{k=1}^\infty (N^k)^{p-p\alpha -2} \,.
  \end{align*}
  Hence, the energy of the graph of $F$ is infinite whenever
  \begin{displaymath}
    p - p\alpha - 2 \ge 0
    \qquad \iff \qquad
    \alpha \le 1 - \frac 2p \,.
  \end{displaymath}
\end{proof}

% Local Variables:
% coding: iso-8859-2
% eval: (ispell-change-dictionary "american")
% mode: latex
% mode: flyspell
% End:

% LocalWords:  Lipschitz

\section{Higher dimensional case}
\label{sec:mfld-ex}

Here we establish an analogue of Theorem~\ref{thm:curvEx} for the energy $\E_p$.
\begin{thm}
  \label{thm:mfldEx}
  Let $p > m(m+1)$ and set $\alpha = 1 - \frac{m(m+1)}p$. There exists a
  manifold $\Sigma$ of class $C^{1,\alpha}$ such that $\E_p(\Sigma)$ is
  infinite.
\end{thm}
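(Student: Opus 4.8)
The plan is to recycle the one–dimensional van~der~Waerden function $F$ from \eqref{def:F} and turn it into an $m$-dimensional graph that depends on a single variable. Concretely, I would set $G(t_1,\ldots,t_m) = F(t_1)$ and take $\Sigma = G([0,1]^m) = \graph(G|_{[0,1]^m}) \subseteq \R^{m+1}$. Since $DG(t) = (f(t_1),0,\ldots,0)$ with $f=F'$ being $\alpha$-Hölder by Lemma~\ref{lem:holder}, the map $G$ is of class $C^{1,\alpha}$ and $\Sigma$ is a compact $C^{1,\alpha}$ manifold (with boundary). As $G$ is Lipschitz, $\HM^m$ restricted to $\Sigma$ is comparable to Lebesgue measure on the parameter cube, so all measure computations may be carried out in the parameters $t^{(i)} = (u_i,s^{(i)}) \in [0,1]\times[0,1]^{m-1}$.

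The heart of the matter is a \emph{lower} bound for $\DC$ on suitably chosen families of $(m+2)$-tuples. For points $\pkt{x}_i = (u_i,s^{(i)},F(u_i))$, $i=0,\ldots,m+1$, in $\R^{m+1}$ one has $\HM^{m+1}(\conv(\pkt{x}_0,\ldots,\pkt{x}_{m+1})) = \frac{1}{(m+1)!}|\det M|$, where $M$ is the matrix whose columns are $\pkt{x}_i-\pkt{x}_0$. At scale $N^{-k}$ I would place the $u$-coordinates of $\pkt{x}_0,\pkt{x}_1,\pkt{x}_2$ in the intervals $X_{k,m}$, $Y_{k,m}$, $Z_{k,m}$ of \S\ref{sec:curv-ex} (which activate the curvature of $F$), pin their transverse coordinates near a common value, and place $\pkt{x}_{2+j}$ ($j=1,\ldots,m-1$) with $u$-coordinate near $u_0$ and transverse offset near $N^{-k}e_j$. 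The essential difficulty is that $M$ is nearly degenerate: because the graph is only $C^1$, the leading flat-scale contributions of size $N^{-k(m+1)}$ to $\det M$ cancel, and only the curvature term, of the much smaller size $N^{-k(m+1+\alpha)}$, survives.

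To extract that surviving curvature I would first subtract $f(u_0)$ times the $u$-row from the $F$-row of $M$; this replaces the last row by the entries $\phi_i = \int_{u_0}^{u_i}(f(t)-f(u_0))\,dt$, each of size $O(N^{-k(1+\alpha)})$ by the Hölder bound on $f$. Expanding $\det M$ along this row and exploiting the transverse box structure, the two dominant cofactors are those omitting the columns of $\pkt{x}_1$ and $\pkt{x}_2$, and their combination reduces, up to sign and lower-order terms, to $(N^{-k})^{m-1}(u_1-u_0)(u_2-u_0)\delta$, where $\delta$ is the second difference of $F$ bounded below by \eqref{est:menger}, i.e. $|\delta|\ge \tfrac1{16}N^{-k\alpha}$. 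Choosing every box radius to be a small fixed multiple $\varepsilon N^{-k}$ (with $N$ large) makes the remaining cofactors and the off-diagonal perturbations smaller by a factor $O(\varepsilon)$, so that $|\det M|\gtrsim N^{-k(m+1+\alpha)}$ uniformly over the product of boxes. Since $\diam\approx N^{-k}$, this gives $\DC \gtrsim N^{k(1-\alpha)}$ on the whole product region. This determinant estimate — proving that the curvature signal $N^{-k(m+1+\alpha)}$ is not swamped by the much larger flat-scale cancellations — is the step I expect to be the main obstacle.

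Finally I would sum over scales. For each $k$ the translation invariance of $\Sigma$ in the transverse directions together with the $N^{-k}$-periodicity of the saw $f_k$ in the $t_1$-direction provides $\approx N^{km}$ pairwise disjoint clusters of boxes (disjointness across clusters and scales being arranged exactly as in \S\ref{sec:curv-ex}), each of measure $\approx N^{-km(m+2)}$ for the $(m+2)$-fold product of $\HM^m$, since every one of the $m+2$ points ranges over an $m$-dimensional box of side $\approx N^{-k}$. Hence the level-$k$ contribution to $\E_p(\Sigma)$ is bounded below by a constant multiple of $N^{km}\cdot N^{kp(1-\alpha)}\cdot N^{-km(m+2)} = N^{k[p(1-\alpha)-m(m+1)]}$. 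For $\alpha = 1-\frac{m(m+1)}p$ the exponent vanishes, so every scale contributes a fixed positive amount and $\E_p(\Sigma)=\infty$. Combined with Theorem~\ref{thm:fin-curv}, this shows that $\alpha_0 = 1-\frac{m(m+1)}p$ is exactly the minimal Hölder exponent above which the energy is finite.
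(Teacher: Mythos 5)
Your proposal is correct and follows essentially the same route as the paper: the same surface $\Sigma=G([0,1]^m)$ with $G$ depending only on the first variable, the same configuration of $m+2$ points (three aligned in the $t_1$-direction to activate the one-dimensional second-difference bound \eqref{est:menger}, plus $m-1$ points offset transversally to span the base), and the same count of $\approx N^{km}$ disjoint clusters of measure $\approx N^{-km(m+2)}$ per scale. The only difference is cosmetic: you extract the lower bound $\HM^{m+1}(\conv)\gtrsim N^{-k(m+1+\alpha)}$ by row-reducing and cofactor-expanding the edge-vector determinant, whereas the paper (Proposition~\ref{prop:h-est}) writes the volume as height times base and bounds the height via barycentric coefficients and the projection estimate \eqref{est:angle} --- both reduce to the same cancellation-surviving curvature term.
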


Our construction is based on the same idea as the construction presented in
\S\ref{sec:curv-ex}. Let $N \in \N$ be a big natural number and let $F : \R \to
\R$ be defined by \eqref{def:F}. We define
\begin{align*}
  G : \R^m \to \R^{m+1}
  \quad \text{by the formula} \quad
  G(x^1,\ldots,x^m) = (x^1,\ldots,x^m, F(x^1))
\end{align*}
and we set $\Sigma = G([0,1]^m)$. From Lemma~\ref{lem:holder} it follows that
$\Sigma$ is a $C^{1,\alpha}$ manifold.

To perform the proof of Theorem~\ref{thm:mfldEx} we need to introduce some
additional notation. By $(\pkt{e}_1,\ldots,\pkt{e}_m)$ we denote the standard
basis of $\R^m$. We adopt the convention to typeset points and~vectors
in~$\R^m$ with the bold font $\pkt{x}, \pkt{y}, \pkt{z}$ etc. and to number
the~components with a~superscript, so we shall have $\pkt{x} =
(x^1,\ldots,x^m)$. Also, vectors and points in $\R^{m+1}$ will always be
marked with an arrow $\wpt{x}, \wpt{y}, \wpt{z}$ etc. and we silently assume
that $\pkt{x}$ and $\wpt{x}$ always satisfy
\begin{displaymath}
  \pi_{\R^m}(\wpt{x}) = \pi_{\R^m}(x^1,\ldots,x^{m+1}) = (x^1,\ldots,x^m) = \pkt{x} \,.
\end{displaymath}

Let $U$ and $V$ be any $m$-dimensional subspaces of $\R^{m+1}$. We define
\begin{displaymath}
  Q_U = \pi_{U^{\perp}} = \opid - \pi_U
  \quad\text{and}\quad
  \dgras(U,V) :=  \| \pi_U - \pi_V\| = \| Q_U - Q_V\| \,.
\end{displaymath}

Let $\varepsilon > 0$ be some small constant - its value will be fixed later on.
Let $A \in (0,1)$ be such that $|F'(t)| \le \varepsilon$ whenever $t < A$. For
$n \in \N$ we set
\begin{displaymath}
  J_n = N^{-n} \Z^m \cap [0,A)^m 
  = \left\{
    \left( \tfrac{k_1}{N^n}, \ldots, \tfrac{k_m}{N^n} \right) 
    : \forall i \in \{1,\ldots,m\}\ k_i \in \{0, 1, \ldots, \lceil A N^n - 1 \rceil\}
  \right\} \,.
\end{displaymath}

For any $\pkt{x} \in J_n$ we define
\begin{displaymath}
  \pkt{x}_0^{(\pkt{x})} = \pkt{x} \,,
  \quad
  \pkt{x}_k^{(\pkt{x})} = \pkt{x} + \frac{\pkt{e}_k}{2N^n} \quad \text{for } k = 1,\ldots,m
  \quad\text{and}\quad
  \pkt{x}_{m+1}^{(\pkt{x})} = \pkt{x} + \frac{\pkt{e}_1}{4N^n} \,.
\end{displaymath}
Of course $\pkt{x}_i^{(\pkt{x})}$ depends on $n$ but we do not highlight it in our
notation. Next, we choose a small number $\delta \in (0, \frac 1{16})$ and
we~define
\begin{displaymath}
  U(\pkt{x}) = \Ball^m\big(\pkt{x}_0^{(\pkt{x})}, \tfrac{\delta}{N^n}\big) \times \cdots \times
  \Ball^m\big(\pkt{x}_{m+1}^{(\pkt{x})}, \tfrac{\delta}{N^n}\big) \,.
\end{displaymath}
Note that
\begin{equation}
  \label{eq:Jquant}
  |J_n| = \lceil A N^n \rceil^m \ge A^m N^{nm} \,,
\end{equation}
\begin{equation}
  \label{eq:Udisj}
  \forall \pkt{x} \in J_{n_1}\ \forall \pkt{y} \in J_{n_2}
  \quad \pkt{x} \ne \pkt{y}\ \Rightarrow\ U(\pkt{x}) \cap U(\pkt{y}) = \emptyset
\end{equation}
\begin{equation}
  \label{eq:Umeas}
  \text{and}\quad
  \forall \pkt{x} \in J_n \quad \HM^{m(m+2)}(U(\pkt{x})) 
  = \left( \frac{\omega_m \delta^m}{N^{nm}} \right)^{m+2} \,.
\end{equation}
Let $T = (\pkt{z}_0,\ldots,\pkt{z}_{m+1}) \in (\R^{m})^{m+2}$ and
$G(T)=(G(\pkt{z}_0),\ldots,G(\pkt{z}_{m+1}))=(\wpt{z}_0,\ldots,\wpt{z}_{m+1})$.
We~define
\begin{align*}
  \face(T) &= \conv\{ \wpt{z}_0,\ldots,\wpt{z}_m \} 
  &&\text{- the face of $G(T)$ spanned by } \wpt{z}_0,\ldots,\wpt{z}_m \,, \\
  \plane(T) &= \lin\{ \wpt{z}_1 - \wpt{z}_0, \ldots, \wpt{z}_m - \wpt{z}_0 \}
  &&\text{- the vector space containing } \face(T) - \wpt{z}_0 \\
  \text{and} \quad
  \height(T) &= \dist(\wpt{z}_{m+1}), \wpt{z}_0 + \plane(T))
  &&\text{- the height of $G(T)$ lowered from } \wpt{z}_{m+1} \,.
\end{align*}
Finally we define
\begin{displaymath}
  \DC_G(T) = \DC(G(T)) = \DC(G(\pkt{z}_0),\ldots,G(\pkt{z}_{m+1})) \,.
\end{displaymath}
Note that 
\begin{displaymath}
  |JG(x)| = \sqrt{\det(DG(x)^T DG(x))} \ge 1 \,,
\end{displaymath}
so we have
\begin{equation}
  \label{eq:Ecomp}
  \E_p(\Sigma) \ge \int_{[0,1]^{m(m+2)}} \DC_G(\pkt{z}_0,\ldots,\pkt{z}_{m+1})^p\ d\pkt{z}_0 \cdots\ d\pkt{z}_{m+1} \,.
\end{equation}

\begin{prop}
  \label{prop:h-est}
  There exists $n_0 \in \N$ such that for all $n \ge n_0$, if $\pkt{x} \in J_n$
  and $T = (\pkt{z}_0,\ldots,\pkt{z}_{m+1}) \in U(\pkt{x})$, then we have
  \begin{displaymath}
    \height(T) \ge C N^{-n(1+\alpha)}
    \qquad \text{and} \qquad
    \DC_G(T) \ge \tilde{C} N^{n(1-\alpha)} \,.
  \end{displaymath}
\end{prop}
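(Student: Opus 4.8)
The plan is to derive both inequalities from a single observation: since $F$ depends only on the first coordinate, the graph $G(\R^m)$ is a cylinder over the planar graph of $F$, so the height $\height(T)$ of the simplex $G(T)$ is governed by a \emph{second difference} of $F$ at scale $N^{-n}$, which has already been estimated in \eqref{est:menger}. First I would record the elementary identity (the same one used in the proof of Lemma~\ref{lem:dc-est})
\[
  \HM^{m+1}(\conv G(T)) = \tfrac{1}{m+1}\,\height(T)\,\HM^m(\face(T)) \,.
\]
Given this, together with $\HM^m(\face(T)) \ge c\,N^{-nm}$ and $\diam(G(T)) \le C\,N^{-n}$, the lower bound on $\DC_G(T) = \HM^{m+1}(\conv G(T))/\diam(G(T))^{m+2}$ (Definition~\ref{def:dc}) is a one-line consequence of the lower bound on $\height(T)$. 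Thus the whole proposition reduces to proving $\height(T) \ge C\,N^{-n(1+\alpha)}$.

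To estimate $\height(T)$ I would identify the base hyperplane $\wpt{z}_0 + \plane(T)$. Because the vectors $\pkt{z}_k - \pkt{z}_0$ lie within $\tfrac{2\delta}{N^n}$ of $\tfrac{\pkt{e}_k}{2N^n}$ and $\delta < \tfrac1{16}$, the points $\pkt{z}_0,\ldots,\pkt{z}_m$ span a nondegenerate simplex in $\R^m$, so the base hyperplane is the graph over $\R^m$ of the unique affine function $\ell$ with $\ell(\pkt{z}_i) = F(z_i^1)$ for $i=0,\ldots,m$. Writing $\wpt{z}_{m+1} = (\pkt{z}_{m+1},F(z_{m+1}^1))$, the distance to this graph is
\[
  \height(T) = \frac{\bigl|F(z_{m+1}^1) - \ell(\pkt{z}_{m+1})\bigr|}{\sqrt{1 + |\nabla\ell|^2}} \,.
\]
Since $\pkt{x} \in J_n \subset [0,A)^m$ and $|F'| \le \varepsilon$ there, the interpolated slopes obey $|\nabla\ell| \le C\varepsilon$, so the denominator is at most $2$ and it remains to bound the \emph{defect} $|F(z_{m+1}^1) - \ell(\pkt{z}_{m+1})|$ from below.

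Now the cylindrical structure pays off. Only the first-coordinate direction carries the variation of $F$; the vertices $\pkt{z}_2,\ldots,\pkt{z}_m$ have first coordinates within $\tfrac{2\delta}{N^n}$ of $z_0^1$, so their influence on $\ell(\pkt{z}_{m+1})$ is a controlled perturbation. In the idealized model $\pkt{z}_i = \pkt{x}_i$ one computes $\ell(\pkt{z}_{m+1}) = \tfrac12\bigl(F(x^1)+F(x^1+\tfrac{1}{2N^n})\bigr)$, whence
\[
  F(z_{m+1}^1) - \ell(\pkt{z}_{m+1}) = F\bigl(x^1+\tfrac{1}{4N^n}\bigr) - \tfrac12\bigl(F(x^1)+F(x^1+\tfrac{1}{2N^n})\bigr) \,,
\]
which equals $\tfrac14 N^{-n}$ times the slope difference bounded below by $\tfrac1{16}N^{-n\alpha}$ in \eqref{est:menger} (the three first coordinates $x^1,\ x^1+\tfrac{1}{4N^n},\ x^1+\tfrac{1}{2N^n}$ play the roles of the $X,Y,Z$-configuration of \S\ref{sec:curv-ex}). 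This gives defect $\ge c\,N^{-n(1+\alpha)}$ and hence $\height(T) \ge C\,N^{-n(1+\alpha)}$.

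The main obstacle will be the bookkeeping for the perturbations of size $\tfrac{\delta}{N^n}$: I must verify that replacing the idealized vertices $\pkt{x}_i$ by arbitrary points of $U(\pkt{x})$ alters the defect by only a fraction of its leading value. This is precisely where $\delta < \tfrac1{16}$ and the threshold $n_0$ enter — for $n \ge n_0$ all relevant first coordinates stay in the flat region $[0,A)$ and inside slightly enlarged $X,Y,Z$-intervals, so the second-difference computation from the proof of Lemma~\ref{lem:infinite} carries over verbatim, with the constant merely degraded by a factor depending on $\delta$. Once the perturbed defect bound is secured, combining it with $\HM^m(\face(T)) \ge c\,N^{-nm}$ (the projected base is a mild perturbation of a simplex of $m$-volume $\tfrac1{m!}(2N^n)^{-m}$) and $\diam(G(T)) \le \sqrt{1+\varepsilon^2}\,\diam(T) \le C\,N^{-n}$ yields $\DC_G(T) \ge \tilde{C}\,N^{n(1-\alpha)}$, completing the proof.
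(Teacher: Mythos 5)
Your proposal is correct and follows essentially the same route as the paper: reduce $\DC_G(T)$ to $\height(T)$ via the base--times--height formula, convert the height to the vertical defect $|F(z_{m+1}^1)-\ell(\pkt{z}_{m+1})|$ using near-horizontality of the base plane (the paper does this with the point $\wpt{w}=\wpt{z}_0+\sum\zeta_i(\wpt{z}_i-\wpt{z}_0)$ and the projection bound \eqref{est:angle}, which is your $\sqrt{1+|\nabla\ell|^2}\le 2$ in different clothing), bound the defect below by the one-dimensional second-difference estimate \eqref{est:menger}, and absorb the influence of the coordinates $2,\ldots,m$ as a $\delta$-small perturbation controlled by the H\"older continuity of $F'$ (the paper's mean value theorem step \eqref{est:error}).
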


Using Proposition~\ref{prop:h-est} we can finish the proof of the main theorem.
\begin{proof}[Proof of Theorem~\ref{thm:mfldEx}]
  Using \eqref{eq:Jquant}, \eqref{eq:Udisj} and~\eqref{eq:Umeas} together
  with~\eqref{eq:Ecomp} we get
  \begin{align*}
    \E_p(\Sigma) &\ge \sum_{n=n_0}^{\infty} \sum_{\pkt{x} \in J_n} \int_{U(\pkt{x})}
    \DC_G(T)^p\ d\pkt{z}_0 \cdots\ d\pkt{z}_{m+1} \\
    &\ge \tilde{C} \sum_{n=n_0}^{\infty} A^m N^{nm} \frac{\omega_m^{m+2} \delta^{m(m+2)}}{N^{nm(m+2)}} N^{n(1-\alpha)p} \\
    &= \hat{C}(\delta,m) \sum_{n=n_0}^{\infty} N^{n(m - m(m+2) + (1-\alpha)p)} \,,
  \end{align*}
  which is infinite if and only if
  \begin{displaymath}
    m - m(m+2) + (1-\alpha)p \ge 0
    \quad \iff \quad
    p \ge \frac{m(m+1)}{1-\alpha}
    \quad \iff \quad
    \alpha \le 1 - \frac{m(m+1)}p \,.
  \end{displaymath}
  In particular, for $\alpha = 1 - \frac{m(m+1)}p$ we have $\E_p(\Sigma) =
  \infty$.
\end{proof}

Now we only need to prove Proposition~\ref{prop:h-est}.
\begin{proof}[Proof of Proposition~\ref{prop:h-est}]
  Recall that $T = (\pkt{z}_0,\ldots,\pkt{z}_{m+1}) \in U(\pkt{x})$ and
  $\pkt{x} \in J_n$, so $T$ and $\pkt{x}$ depend on $n$. Note that the vectors
  $\pkt{z}_i - \pkt{z}_0$ for $i=1,\ldots,m$ satisfy
  \begin{align*}
    (1 - 2\delta)\frac 1{2N^n} \le |\pkt{z}_i - \pkt{z}_0| &\le (1 + 2\delta)\frac 1{2N^n} \\
    \text{and} \quad
    |\langle \pkt{z}_i - \pkt{z}_0, \pkt{z}_j - \pkt{z}_0 \rangle| &\le \frac{3 \delta}{(2N^n)^2} \,.
  \end{align*}
  Hence, decreasing $\delta$ we can make the vectors $\pkt{z}_i - \pkt{z}_0$ for
  $i=1,\ldots,m$ form a roughly orthogonal basis of $\R^m$. Observe that
  $\varepsilon > 0$ can be chosen as small as we want and recall that we have
  $|F'| \le \varepsilon$. Therefore, we can find $\varepsilon$ such that
  for~each $n$ we have (see~\cite[\S1.3]{slawek-phd} for the proof)
  \begin{equation}
    \label{est:angle}
    \dgras(\plane(T), \R^m)
    = \|\pi_{\plane(T)} - \pi_{\R^m}\|
    = \|Q_{\plane(T)} - Q_{\R^m}\| \le \frac 12 \,.
  \end{equation}
  Note that the vector $\pkt{z}_{m+1} - \pkt{z}_0$ lies in $\R^m$, so it can
  be expressed as
  \begin{displaymath}
    \pkt{z}_{m+1} - \pkt{z}_0 = \sum_{i=1}^m \zeta_i (\pkt{z}_i - \pkt{z}_0) \,,
  \end{displaymath}
  for some $\zeta_1$, \ldots, $\zeta_m$. Observe that when we decrease $\delta$
  to zero, the point $\pkt{z}_{m+1}$ approaches the midpoint $\frac 12
  (\pkt{z}_0+\pkt{z}_1)$, so $\zeta_1$ converges to $\frac 12$ and all the other
  $\zeta_i$ for $i = 2,\ldots,m$ converge to $0$. The values $|\zeta_1 - \frac
  12|$ and $|\zeta_i|$ can be bounded above independently of the scale we are
  working in (i.e. independently of the choice of $n$) and also independently of
  the choice of $T$ in $U(\pkt{x})$. Hence
  \begin{equation}
    \label{eq:zeta-coeff}
    \left|\zeta_1 - \frac 12\right| \le \Xi(\delta)
    \quad\text{and}\quad
    |\zeta_i| \le \Xi(\delta) \quad \text{for } i = 2,\ldots,m \,,
    \quad\text{where}\ \ 
    \Xi(\delta) \xrightarrow{\delta \to 0} 0
  \end{equation}
  and $\Xi(\delta)$ is a term depending only on $\delta$ and $m$.

  For $i = 1,\ldots,m+1$ we set 
  \begin{displaymath}
    \wpt{z}_i = G(\pkt{z}_i)
    \quad \text{and we define} \quad
    \wpt{w} = \wpt{z}_0 + \sum_{i=1}^m \zeta_i (\wpt{z}_i - \wpt{z}_0) \in \wpt{z}_0 + \plane(T) \,.
  \end{displaymath}
  Then
  \begin{displaymath}
    \pkt{z}_{m+1} = \pi_{\R^m}(\wpt{z}_{m+1}) = \pi_{\R^m}(\wpt{w})
    \quad\text{and}\quad
    (\wpt{z}_{m+1} - \wpt{w}) \perp \R^m 
  \end{displaymath}
  \begin{displaymath}
    \text{and also}\quad
    Q_{\plane(T)}(\wpt{z}_{m+1} - \wpt{w}) = Q_{\plane(T)}(\wpt{z}_{m+1} - \wpt{z}_0) \,.
  \end{displaymath}
  Therefore, using the angle estimate \eqref{est:angle} we obtain
  \begin{align}
    \label{est:hwz}
    \height(T) &= \dist(\wpt{z}_{m+1}, \wpt{z}_0 + \plane(T))
    = |Q_{\plane(T)}(\wpt{z}_{m+1} - \wpt{z}_0)| \\
    &= |Q_{\plane(T)}(\wpt{z}_{m+1} - \wpt{w})|  
    = |Q_{\R^m}(\wpt{z}_{m+1} - \wpt{w}) - (Q_{\R^m} - Q_{\plane(T)})(\wpt{z}_{m+1} - \wpt{w})| \notag \\
    &\ge |\wpt{z}_{m+1} - \wpt{w}| - \frac 12 |\wpt{z}_{m+1} - \wpt{w}| 
    = \frac 12 |\wpt{z}_{m+1} - \wpt{w}| \notag \,.
  \end{align}
  Hence, to get a lower bound on $\height(T)$ it suffices to estimate
  $|\wpt{z}_{m+1} - \wpt{w}|$ from below. We calculate
  \begin{align}
    |\wpt{z}_{m+1} - \wpt{w}| 
    &= \left|
      \big( \pkt{z}_{m+1},F(z_{m+1}^1) \big) 
      - \big( \pkt{z}_{m+1}, F(z_0^1) + \sum_{i=1}^m \zeta_i (F(z_i^1) - F(z_0^1)) \big)
    \right| \notag \\
    &= \left|
      F(z_{m+1}^1) -  F(z_0^1) + \sum_{i=1}^m \zeta_i (F(z_i^1) - F(z_0^1))
    \right| \notag \\
    \label{est:height}
    &\ge 
    \left| F(z_0^1 + \zeta_1 (z_1^1 - z_0^1)) -  F(z_0^1) - \zeta_1 (F(z_1^1) - F(z_0^1)) \right| \\
    &\phantom{=\Bigg| }
    - \Bigg|
    \left(
    F\big(z_0^1 + \zeta_1 (z_1^1 - z_0^1) + \sum_{i=2}^m \zeta_i (z_i^1 - z_0^1)\big) 
    - F(z_0^1 + \zeta_1 (z_1^1 - z_0^1))
    \right) \notag \\
    &\phantom{=\Bigg|-\Bigg| }
    - \sum_{i=2}^m \zeta_i (F(z_i^1) - F(z_0^1)) \Bigg| \notag \,.
  \end{align}
  Using \eqref{eq:zeta-coeff} one can estimate the term
  \begin{displaymath}
    \left| F(z_0^1 + \zeta_1 (z_1^1 - z_0^1)) -  F(z_0^1) - \zeta_1 (F(z_1^1) - F(z_0^1)) \right|
  \end{displaymath}
  exactly the same way as in the one dimensional case (cf. \eqref{est:menger}
  with $z_0^1$, $z_1^1$ and $z_0^1 + \zeta_1 (z_1^1 - z_0^1)$ playing the
  roles of $x$, $y$ and $z$ respectively and with $\frac{z-x}{y-x}=\zeta_1$)
  and then obtain
  \begin{equation}
    \label{est:main-part}
    \left| F(z_0^1 + \zeta_1 (z_1^1 - z_0^1)) -  F(z_0^1) - \zeta_1 (F(z_1^1) - F(z_0^1)) \right|
    \ge C |z_1^1 - z_0^1|^{1+\alpha} 
    \ge \tilde{C} N^{-n(1+\alpha)} \,.
  \end{equation}
  We estimate the remaining terms using the mean value theorem for $F$. First we
  have
  \begin{equation}
    \label{eq:mv-fst}
    F\big(z_0^1 + \zeta_1 (z_1^1 - z_0^1) + \sum_{i=2}^m \zeta_i (z_i^1 - z_0^1)\big) 
    - F(z_0^1 + \zeta_1 (z_1^1 - z_0^1))
    = F'(\xi_{m+1}) \sum_{i=2}^m \zeta_i (z_i^1 - z_0^1) \,,
  \end{equation}
  where $\xi_{m+1}$ is some number satisfying
  \begin{displaymath}
    z_{m+1}^1 - \frac{2m\delta}{N^n} < \xi_{m+1} < z_{m+1}^1 + \frac{2m\delta}{N^n}
  \end{displaymath}
  In the same way we obtain
  \begin{equation}
    \label{eq:mv-rest}
    \sum_{i=2}^m \zeta_i (F(z_i^1) - F(z_0^1))
    = \sum_{i=2}^m F'(\xi_i) \zeta_i (z_i^1 - z_0^1) \,,
  \end{equation}
  where $\xi_i \in (z_0^1-\frac{2\delta}{N^n},z_0^1+\frac{2\delta}{N^n})$. Note
  that
  \begin{displaymath}
    |\xi_{m+1} - \xi_i| \le C |z_{m+1}^1 - z_i^1| 
    \le C' N^{-n} \,,
  \end{displaymath}
  \begin{displaymath}
    |z_i^1 - z_0^1| \le \bar{C} \delta N^{-n}
    \quad \text{and} \quad
    |F'(\xi_i) - F'(\xi_{m+1})| \le \hat{C} |\xi_i - \xi_{m+1}|^{\alpha}
    \le \tilde{C} N^{-n\alpha} \,.
  \end{displaymath}
  Putting \eqref{eq:mv-fst} and \eqref{eq:mv-rest} together we get
  \begin{multline}
    \label{est:error}
    \Bigg|
    \left(
    F(z_0^1 + \zeta_1 (z_1^1 - z_0^1) + \sum_{i=2}^m \zeta_i (z_i^1 - z_0^1)) 
    - F(z_0^1 + \zeta_1 (z_1^1 - z_0^1))
    \right) \\
    - \sum_{i=2}^m \zeta_i (F(z_i^1) - F(z_0^1)) \Bigg|
    = \left| \sum_{i=2}^m (F'(\xi_i) - F'(\xi_{m+1})) \zeta_i (z_i^1 - z_0^1) \right|
    \le \hat{C} \delta N^{-n(1+\alpha)} \,.
  \end{multline}
  Plugging \eqref{est:main-part} and \eqref{est:error} into \eqref{est:height}
  we obtain
  \begin{displaymath}
    |\wpt{z}_{m+1} - \wpt{w}| 
    \ge \tilde{C} N^{-n(1+\alpha)} - \hat{C} \delta N^{-n(1+\alpha)} \,.
  \end{displaymath}
  Since we have a freedom in choosing $\delta$, we can make $\hat{C} \delta$ as
  small as we want, so recalling~\eqref{est:hwz} the first part of
  Proposition~\ref{prop:h-est} is proven. The second part follows from a simple
  calculation
  \begin{align*}
    \DC_G(T) &= \frac{\HM^{m+1}(\conv(G(T)))}{\diam(G(T))^{m+2}}
    = \frac{\height(T) \HM^m(\face(T))}{(m+1)\diam(G(T))^{m+2}} \\
    &\ge C \frac{\height(T) \HM^m(\pi_{\R^m}(\face(T)))}{\diam(T)^{m+2}}
    \ge \tilde{C} \frac{N^{-n(1+\alpha)} N^{-nm}}{N^{-n(m+2)}}
    = \tilde{C} N^{n(1-\alpha)} \,.
  \end{align*}
\end{proof}

% Local Variables:
% coding: iso-8859-2
% eval: (ispell-change-dictionary "american")
% mode: latex
% mode: flyspell
% End:

\section*{Acknowledgements}
The first author was partially supported by the Polish Ministry of Science
grant no.~N~N201 611140 (years~2011-2012). \\
The second author was partially supported by the joint German-Polish project "Geometric
curvature energies".\\
The authors are intebted to Prof. P. Strzelecki for his valuable suggestions.
The second author would like to thank Jonas Azzam for his questions, ideas and fruitful discussions.

\bibliography{c1alpha}{}
\bibliographystyle{hplain}

\end{document}